\newcommand{\R}{\mathbb{R}}
\newcommand{\Z}{\mathbb{Z}}
\newcommand{\N}{\mathbb{N}}
\newcommand{\T}{\mathbb{T}}
\newcommand{\bB}{\mathcal{B}}
\newcommand{\cK}{\mathcal{K}}
\newcommand{\s}{\sigma}
\newcommand{\go}{\omega}
\newcommand{\gl}{\lambda}
\newcommand{\Dt}{{\Delta t}}
\newcommand{\ra}{\rho_\alpha}
\newcommand{\bc}{\overline c}
\newcommand{\tdU}{\tilde U}
\newcommand{\tdM}{\tilde M}
\newcommand{\diver}{{\rm{div}}}
\theoremstyle{definition}
\newtheorem{theorem}{Theorem}
\numberwithin{theorem}{section}
\newtheorem{lemma}[theorem]{Lemma}
\newtheorem{proposition}[theorem]{Proposition}
\newtheorem{remark}[theorem]{Remark}
\numberwithin{equation}{section}
\begin{document}
 
\renewcommand{\thefootnote}{\fnsymbol{footnote}}	
\title{Approximation of an optimal control problem for the time-fractional
Fokker-Planck equation}
\author{Fabio Camilli\footnotemark[1] \and Serikbolsyn Duisembay\footnotemark[2]\and 
Qing Tang\footnotemark[3]}
\date{\today}
\footnotetext[1]{Dip. di Scienze di Base e Applicate per l'Ingegneria,  Sapienza Universit{\`a}  di Roma, via Scarpa 16,	00161 Roma, Italy, ({\tt e-mail: fabio.camilli@sbai.uniroma1.it})}
\footnotetext[2]{ King Abdullah University of Science and Technology
	(KAUST),  CEMSE division, Thuwal 23955-6900, Saudi Arabia, ({\tt e-mail: serikbolsyn.duisembay@kaust.edu.sa})}
\footnotetext[3]{China University of Geosciences (Wuhan), Wuhan, China ({\tt e-mail: tangqingthomas@gmail.com })}
\maketitle
\begin{abstract}
	In this paper, we study the numerical approximation of a system of PDEs with fractional time derivatives.
	This system is derived from an optimal control problem for a time-fractional Fokker-Planck equation with time dependent drift by convex duality argument. The system is composed by a time-fractional  backward Hamilton-Jacobi-Bellman and a forward Fokker-Planck equation and can be used to describe the evolution of probability density of particles trapped in anomalous diffusion regimes. We approximate Caputo derivatives in the system by means of L1 schemes and the Hamiltonian by finite differences. The scheme for the Fokker-Planck equation is constructed such that the duality structure of the PDE system is preserved on the discrete level. We prove the well posedness of the scheme and the convergence to the solution of the continuous problem.	
\end{abstract}
\begin{description}
	\item [{\bf AMS subject classification}:] 65N06, 91A13, 35R11 .
	\item[{\bf Keywords}:]   Mean Field Games, time-fractional Fokker-Planck equation, Caputo   derivative,   finite differences, convergence.
\end{description}
\section{Introduction}
In this paper, we  study the numerical approximation of the  system
\begin{equation}\label{eq:frac_mfg}
\begin{cases}
\partial_{[t,T)}^\alpha u - \Delta u + H(x,Du) = f[m](x) &(x,t)\in\T^2\times (0,T),\\[4pt]
\partial_{(0,t]}^\alpha m - \Delta m - \diver(mH_p(x,Du)) = 0&(x,t)\in\T^2\times (0,T),\\[4pt]
u(x,T)=u_T(x),\,m(x,0)=m_0(x), \\[4pt]
\int_{\T^2}m_0(x)dx=1,
\end{cases}
\end{equation}
where $\T^2$ denotes the unit torus in $\R^2$ and   $\partial_{(0,t]}^\alpha\cdot$, $\partial_{[t,T)}^\alpha\cdot$  are the forward and backward time-fractional  Caputo derivatives defined, for $\alpha\in (0,1)$, by 
\begin{align}
&\partial_{(0,t]}^\alpha m(x,t) = \frac{1}{\Gamma(1-\alpha)}\int_{0}^{t}\frac{\partial_s m(x,s)}{(t-s)^{\alpha}}ds, \label{eq:caputo_forward}\\
&\partial_{[t,T)}^\alpha u(x,t) = -\frac{1}{\Gamma(1-\alpha)}\int_{t}^{T}\frac{\partial_su(x,s)}{(s-t)^{\alpha}}ds.\label{eq:caputo_backward}
\end{align}
The system is composed by time-fractional backward  Hamilton-Jacobi-Bellman   and 
forward Fokker-Planck equations with the corresponding final and initial conditions.
If the coupling cost $f[m]$ admits a primitive $F(x,m)$,  \eqref{eq:frac_mfg} can be derived via convex duality from an optimal control problem of 
\[\inf_{v,m}\int_0^t\int_{\T^2} (mL(x,-v)+F(x,m))dx\,ds+\int_{\T^2}(I^{1-\alpha}_{(0,t]}m)(T)dx,\]
 subject to the constraint given by the time-fractional Fokker-Planck equation
\begin{equation}\label{frac_fp}
\begin{cases}
\partial_{(0,t]}^\alpha m - \Delta m - \diver(mv) = 0,&(x,t)\in\T^2\times (0,T),\\[4pt]
m(x,0)=m_0(x),\\[4pt]
\int_{\T^2}m_0(x)dx=1.
\end{cases}
\end{equation}
Here the forward Riemann-Liouville fractional integral is defined as 
$$
I^{1-\alpha}_{(0,t]}m(t)=\frac{1}{\Gamma(1-\alpha)}\int_0^t\frac{m(s)}{(t-s)^\alpha}ds.
$$
System \eqref{eq:frac_mfg} is derived in  \cite{q}, where the Fokker-Planck equation \eqref{frac_fp} is considered in its weak formulation via a general fractional Sobolev space framework. The wellposedness of the coupled system is obtained using compactness and Schauder fixed-point argument. Time fractional advection-diffusion equations have been extensively used for studying evolution of probability of particles governed by subdiffusion behavior. This refers to the trajectory of a single particle is modeled by non Markovian stochastic process where the mean square displacement is no longer linear \cite{abmw, bg,mk,ms,zaslavsky2002}. The nonlocal structure of fractional time derivative makes it very useful for describing memory effects and deriving the power law decay structure of energy in a diffusion-transport PDE system \cite{du,jlz,Jin2019,tt}.  \\
\indent Time fractional Hamilton-Jacobi-Bellman equation has been considered from a probabilistic point of view by Kolokoltsov et al.\cite{Kolokoltsov2014}.  The theory of viscosity solution of time-fractional Hamilton-Jacobi-Bellman equation has been constructed by Topp and Yangari \cite{Topp2017}, Giga, Namba et al. \cite{glm,Namba2018}. Recently Camilli and Goffi studied   weak solutions to time-fractional Hamilton-Jacobi-Bellman equation in a Sobolev space \cite{cg}. 
The fractional Fokker-Planck equation can be more suitably considered in the variational formulation and the weak solution is in the sense of distributions. The natural functional structure would be that of vector valued fractional Sobolev space and wellposedness is usually established by a priori estimates based on energy dissipation. This approach has been adopted in a large number of works, \cite{du,du2018invitation,Gorenflo2015,Jin_2015,Jin2019,Shen_2019,q,tt} for examples. \\
\indent From a numerical point view, the essential step to study the fractional PDE is to approximate the time-fractional derivative operator. We use a  classical scheme called L1 approximation. This scheme is naturally derived from the approximation of the fractional integral as a Riemann sum and long known to be consistent (for an concise illustration we refer to section 3 of \cite{tt}). Moreover, L1 approximation scheme stands out by being able to preserve at the discrete level certain desirable features of the origin PDEs, such as maximum principle \cite{glm} and energy stability \cite{du,Shen_2019,tt}. Therefore, approximations can be constructed to converge to suitable notions of weak solutions, which may be viscosity solutions to Hamilton-Jacobi-Bellman equations \cite{glm}, or distributional solutions in Sobolev spaces to diffusion (phase field) equations \cite{du,Shen_2019,tt}. \\
\indent If fractional derivatives are simply replaced by first order derivatives, system \eqref{eq:frac_mfg} becomes the classical Mean Field Games system introduced in \cite{ll} to characterize the Nash equilibria for differential games with a large number of agents. Achdou et al. \cite{acd,acc} introduced an finite difference type discretization of mean field games system where the scheme for the Fokker-Planck equation is deduced via a duality argument, reproducing at a discrete level a crucial property of the continuous system. This type of schemes has been further developed in \cite{afg,bks,cs}. \\
\indent In this paper, we present a finite differences approximation of \eqref{eq:frac_mfg} by combining the classical approximation scheme for mean field games system developed in \cite{acd,acc} with L1 schemes for the fractional derivatives. The duality structure of the time-fractional PDE system and the energy stable structure of the advection diffusion equation are shown to be preserved. Under appropriate monotonicity and convexity assumptions, we prove existence, uniqueness of the solution to the discrete system and convergence to the solution of the original PDE system. Some numerical tests are performed to illustrate the different behavior of the time-fractional case with respect to the standard one.\\
\indent The paper is organized as follows. In Section \ref{sec:scheme}, we introduce the finite differences scheme and prove some preliminary results.  Section \ref{sec:well_posed} is devoted to show the well posedness of the discrete problem. In Section \ref{sec:convergence}, we prove the convergence of the scheme and in Section \ref{sec:numerics} we carry out some numerical simulations. Finally we discuss some possible directions for future works. 


\section{The numerical scheme}\label{sec:scheme}
In this section we introduce a  numerical scheme for \eqref{eq:frac_mfg} and  we prove some   preliminary results  to show  its well-posedness.\\
\indent For simplicity of notations, we assume that the dimension $d$ is equal to $2$, since the extension for general $d$ will be clear from this special case.
Let $\T^2_h$ be a uniform grid on the torus with step $h$, (this supposes that $N_h = 1/{h}$ is an integer), and denote by $x_{i,j}$   a generic point in  $\T^2_h$ (an anisotropic mesh with steps $h_1$ and $h_2$ is possible too and we have taken $h_1=h_2$ only for simplicity). The values  $U^n_{i,j}$ and $M^n_{i,j}$ denotes the numerical approximation of the function $u$ and $m$ at $(x_{i,j},t_n)=(ih,jh, n\Dt)$, $0\le i,j\le N_h$, $n=0,\dots,N$ 
(assuming that $N=T/\Dt$ is an integer). We also denote by  $U^n$ and $M^n$  the grid function taking the values $U^n_{i,j}$ and $M^n_{i,j}$ at $x_{i,j}\in \T^2_h$. For a generic pair of grid functions $U$, $V$ on $\T^2_h$, we  define the scalar product \\
$$
  (U,V)_2=h^2\sum_{  i,j } U_{i,j} V_{i,j},
  $$
 the norm\\
 $$
  \|U\|_2^2=(U,U)_2
  $$
   and the norm 
  $$ 
   \|U\|_\infty=\max_{0\le i,j <N_h}|U_{ij}|.
   $$
    We consider  the compact and convex set
\[
\cK_h=\{M= (M_{i,j})_{ 0\le i,j <N_h}:   (M,\overline 1)_2=1;\quad   M_{i,j}\ge 0 \}\,,
\]
where $\overline 1$  denotes the $N_h^2$-tuple with all components equal to $1$. Note that $\cK_h$  can be viewed as the set of the discrete probability measures on $\T^2_h$. \\
\indent We first recall the L1   numerical method for the discretization of the forward and backward Caputo time-fractional derivatives (see \cite{jlz,lx}). 
The approximations of \eqref{eq:caputo_forward} and \eqref{eq:caputo_backward} at $(x_{i,j},t_n)$ are given by
\begin{align}
D_{\Delta t}^{\alpha} M_{i,j}^{n} = \frac{1}{\rho_\alpha} \left( M_{i,j}^{n} - \sum_{k=0}^{n-1} c_k^{n} M_{i,j}^{k} \right),\label{eq:discr_frac_deriv_forw}\\
\bar D_{\Delta t}^{\alpha} U_{i,j}^{n} = \frac{1}{\rho_\alpha}\left( U_{i,j}^{n} - \sum_{k=n+1}^{N} \bc_{n}^k U_{i,j}^k\right) ,\label{eq:discr_frac_deriv_back}
\end{align}
where 
$$
\rho_\alpha = \Gamma(2-\alpha)\Delta t^{\alpha}
$$
and the forward and backward coefficients are defined by
\[
c_k^n = 
\begin{cases}
n^{1-\alpha}-(n-1)^{1-\alpha}, \quad k=0,\\
2(n-k)^{1-\alpha} - (n+1-k)^{1-\alpha} - (n-1-k)^{1-\alpha}, \quad k=1,\dots, n-1,
\end{cases}
\]
and
\[
\bc_n^k = 
\begin{cases}
2(k-n)^{1-\alpha} - (k+1-n)^{1-\alpha} - (k-1-n)^{1-\alpha}, \quad k=n+1,\dots, N-1, \\
(N-n)^{1-\alpha}-(N-1-n)^{1-\alpha}, \quad k=N.
\end{cases}
\]
In the following lemma, we summarize some properties of the coefficients  $c_k^n$, $\bc_n^k$.
\begin{lemma}\label{lem:c_properties}
We have
	\begin{enumerate}
		\item[(i)] $c_k^{n} >0$ for $0\le k \leq n-1$ and  $\bc_n^m >0$ for $ n+1\le m\le N$.
		\item[(ii)] $\bar c_k^n = c_{k+1}^{n+1}$ for $1\le n\le N-1$ and $0\le k\le n-1$.
		\item[(iii)] $\sum_{k=0}^{n-1} c_k^{n}=1$ 
		and $\sum_{k=n+1}^{N} \bc_n^k=1.$
	\end{enumerate}
\end{lemma}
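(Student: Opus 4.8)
The plan is to reduce all three claims to elementary properties of the single sequence $g(j):=j^{1-\alpha}$, $j\ge 0$. Since $0<1-\alpha<1$, the map $x\mapsto x^{1-\alpha}$ is strictly increasing and strictly concave on $[0,\infty)$; equivalently, the first differences $b_j:=g(j)-g(j-1)$ are strictly positive and strictly decreasing, with $b_1=g(1)-g(0)=1$ (using $0^{1-\alpha}=0$). Every coefficient in the lemma is either a first difference of $g$ (a boundary term) or minus a second difference of $g$ (an interior term), and this is essentially the only structural fact I will use.

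For (i), I would first normalise the interior coefficients. Putting $j=n-k$ in the definition of $c_k^n$ for $1\le k\le n-1$ gives $c_k^n=2g(j)-g(j+1)-g(j-1)$, and putting $\ell=k-n$ in the definition of $\bc_n^k$ for $n+1\le k\le N-1$ gives $\bc_n^k=2g(\ell)-g(\ell+1)-g(\ell-1)$. Both are minus the second difference of $g$, hence strictly positive by strict concavity (the midpoint inequality $g(j)>\tfrac12(g(j-1)+g(j+1))$). The two boundary coefficients $c_0^n=g(n)-g(n-1)$ and $\bc_n^N=g(N-n)-g(N-1-n)$ are first differences of $g$, hence positive by monotonicity. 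This settles (i).

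For (ii) I would substitute directly. The interior backward coefficient $\bc_k^n$ (subscript $k$, superscript $n$) equals $2g(n-k)-g(n+1-k)-g(n-1-k)$, while $c_{k+1}^{n+1}$ equals $2g((n+1)-(k+1))-g((n+1)+1-(k+1))-g((n+1)-1-(k+1))=2g(n-k)-g(n+1-k)-g(n-1-k)$; the simultaneous shift of both indices leaves the distance to the diagonal unchanged, which is precisely why the forward and backward stencils coincide. One only checks the ranges are compatible: for $1\le n\le N-1$ and $0\le k\le n-1$, the index $k+1\in\{1,\dots,n\}$ lands in the interior regime of $c_\cdot^{n+1}$, and $n\le N-1$ places $\bc_k^n$ in its interior regime as well.

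Assertion (iii) is then a telescoping sum. Writing $c_k^n=b_{n-k}-b_{n-k+1}$ for $1\le k\le n-1$, the interior part collapses to $\sum_{k=1}^{n-1}c_k^n=\sum_{j=1}^{n-1}(b_j-b_{j+1})=b_1-b_n=1-(g(n)-g(n-1))=1-c_0^n$, so adding the boundary term $c_0^n$ yields $\sum_{k=0}^{n-1}c_k^n=1$. The backward identity is the same computation after $\ell=k-n$: the interior part telescopes to $b_1-b_{N-n}=1-\bc_n^N$, and adding $\bc_n^N$ gives $1$. The only point requiring genuine care is the bookkeeping of the two boundary indices ($k=0$ forward and $k=N$ backward), which must be separated off before telescoping, together with the degenerate cases such as $n=1$ (empty interior sum, $c_0^1=g(1)-g(0)=1$); no step poses a real difficulty, since the whole lemma rests on the monotonicity and concavity of $j\mapsto j^{1-\alpha}$.
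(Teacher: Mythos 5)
Your proof is correct. The paper itself states Lemma \ref{lem:c_properties} without proof, treating these as standard properties of the L1 coefficients (deferring to the literature cited for the scheme, e.g.\ \cite{jlz,lx}), and your argument---reducing everything to the strict monotonicity and strict concavity of $j\mapsto j^{1-\alpha}$, so that interior coefficients are negated second differences (hence positive, and telescoping against the boundary first differences $c_0^n$ and $\bar c_n^N$ to give sum $1$) and (ii) follows from the shift-invariance of the second-difference stencil---is exactly the standard verification the paper implicitly relies on, including the correct handling of the index swap in $\bar c_k^n$ and the degenerate cases $n=1$ and $n=N-1$.
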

\indent By simple calculation, it can be shown that an equivalent form of the L1 scheme of the discrete forward Caputo derivative is
\begin{equation}\label{discreteCaputo2}
D_{\Delta t}^{\alpha} U_{i,j}^{n} =\sum_{k=0}^nb_k\frac{U_{i,j}^{n-k}-U_{i,j}^{n-1-k}}{\Delta t},
\end{equation}
where 
$$
b_k=\frac{\Delta t^{1-\alpha}}{\Gamma(2-\alpha)}[(k+1)^{1-\alpha}-k^{1-\alpha}].
$$
\indent When written in this form and used for fractional phase field equations it has been shown the L1 scheme of Caputo derivative can preserve on a discrete level maximum principle, energy stability and mass conservation \cite{tt}.\\
\indent For the discretization  in space, we rely on the approach  in \cite{acd}.  We introduce the finite differences operators
\begin{equation}
\label{eq:finite_diff}
(D_1^+ U )_{i,j} = \frac{ U_{i+1,j}-U_{i,j}   } {h} \quad \hbox{and }\quad  (D_2^+ U )_{i,j} = \frac{ U_{i,j+1}-U_{i,j}   } {h},
\end{equation}
and define
\[
[D_h U]_{i,j} =\left((D_1^+ U )_{i,j} , (D_1^+ U )_{i-1,j}, (D_2^+ U )_{i,j}, (D_2^+ U )_{i,j-1}\right) ^T.\label{eq:discrete_gradient}
\]
\indent For the  discretization of the Laplace operator, we consider a standard five points approximation
\[
(\Delta_h U)_{i,j} = -\frac{1}{h^2} (4 U_{i,j} - U_{i+1,j} - U_{i-1,j} - U_{i,j+1} - U_{i,j-1}).
\]
\indent In order to approximate the Hamiltonian $H$ in equation \eqref{eq:frac_mfg}, we consider a  numerical Hamiltonian $g: \T^2 \times \R^4\to \R$,  $(x,q_1,q_2,q_3,q_4)\mapsto g\left(x,q_1,q_2,q_3,q_4\right)$
satisfying the following conditions:
\begin{itemize}
	\item[(G1)] $g$ is nonincreasing with respect to $q_1$ and $q_3$, and nondecreasing with respect to $q_2$ and $q_4$.
	\item[(G2)] $g$ is consistent with the Hamiltonian $H$, i.e. 
	\begin{displaymath}
	g(x,q_1,q_1,q_2,q_2)=H(x,q), \quad \forall x\in \T^2,   q=(q_1,q_2)\in \R^2.
	\end{displaymath}
	\item[(G3)]  $g$ is of class $C^1$.
	\item[(G4)] the function $(q_1,q_2,q_3,q_4) \mapsto g(x,q_1,q_2,q_3,q_4)$ is convex.
\end{itemize}

\indent For the discretization of the coupling term $f[m]$ in the Hamilton-Jacobi-Bellman equation, denoted by  $m_h$   the piecewise constant function with   value $M_{i,j}$ in the square $|x-x_{i,j}|_\infty \le h/2$, we  assume that $f[m_h]$ can be calculated in practice. Then, the approximation of $f[m](x_{i,j})$ is given by
$(f_h[M])_{i,j} = f[m_h](x_{i,j})$. In particular, if $f$ is a local operator, i.e. $f[m](x)=f(m(x))$, then we set $f_h[M]_{j,k}=f(M_{j,k})$.\\
\indent We assume that:
\begin{itemize}
	\item[(F1)]  $f_h$ is continuous and maps   $\cK_h$  on a bounded set of grid functions.
	\item[(F2)] $f_h$ is monotone, i.e. for $M,\bar M\in \cK_h$
		\[
		\left( f_h[M]-f_h[\bar M],M-\bar  M \right)_2 \le 0 \Rightarrow  M=  \bar M.
		\]
	\item[(F3)] There exists $C$ independent of $h$ such that
	for all grid functions  $M\in \cK_h$
	\begin{align*}
	& \| f_h[M] \|_{\infty} \le C\\
	& | f_h[M]_{i,j} - f_h[M]_{\bar i,\bar j} | \le C |x_{i,j}-x_{\bar i,\bar j}|\qquad \forall 0\le i,j,\bar i,\bar j \le N_h.
	\end{align*}
	\item[(F4)]   For any $m\in \cK=\{m\in L^1(\T^2): m\ge 0, \int_{\T^2}m(x)dx=1\}$, defined 
	\[M_h=\frac 1 {h^2} \int_{|x-x_{i,j}|_{\infty}\le h/2} m(x)dx,\] then 
		\[
		\lim_{h\to 0}\,\sup_{m\in\cK}\|f[m]-f_h[M_h]\|_\infty =0.
		\]
\end{itemize}
\indent We approximate the Hamilton-Jacobi-Bellman equation in \eqref{eq:frac_mfg} by means of the scheme
\begin{equation}\label{discrete_HJequation}
\left\{
\begin{array}{ll}
\bar D_{\Delta t}^{\alpha} U^n_{i,j} - (\Delta_h U^{n})_{i,j} + g(x_{i,j},[D_h U^{n}]_{i,j}) = f_h[M^{n+1}]_{i,j}, \quad& n=0,\dots, N-1,\, 0\le i,j\le N_h,\\[6pt]
U^N_{i,j}=u_T(x_{i,j})& 0\le i,j\le N_h,
\end{array}
\right.
\end{equation}
for a given  $M= (M^n)_{ 0\le n\le N-1}$. Note that the previous scheme is backward, implicit and non local in the time variable,  since the solution at step $n$ depends on the values of the solution computed at previous steps $n+1,\dots, N$.\\
\indent Concerning the approximation of the Fokker-Planck equation, following \cite{acd}, we note that the weak formulation of previous equation involves the identity
\[ -\int_{\T^2} \diver\left(m   H_p(x,D u)\right) w\, dx= \int_{\T^2} m H_p(x,D u) \cdot D w\, dx,\]
for any test function $w$. \\
\indent Using discrete integration by parts, it can be approximated by
\begin{equation}\label{integralterm}
-h^2 \sum_{i,j} \bB_{i,j}(U,M) W_{i,j}= h^2\sum_{i,j}    M_{i,j} g_q (x_{i,j},[D_h U]_{i,j}) \cdot [D_h W]_{i,j},
\end{equation}
where
\[
\bB_{i,j}(U,M) = 
{\small \frac{1}{h} \left(
\begin{aligned}
&  \left( \begin{aligned} & M_{i,j} \frac{\partial g}{\partial q_1}(x_{i,j}, [D_hU]_{i,j}) - M_{i-1,j} \frac{\partial g}{\partial q_1}(x_{i-1,j}, [D_hU]_{i-1,j}) \\
& + M_{i+1,j} \frac{\partial g}{\partial q_2}(x_{i+1,j}, [D_hU]_{i+1,j}) - M_{i,j} \frac{\partial g}{\partial q_2}(x_{i,j}, [D_hU]_{i,j})  \end{aligned}\right) \\
& + \left( \begin{aligned} & M_{i,j} \frac{\partial g}{\partial q_3}(x_{i,j}, [D_hU]_{i,j}) - M_{i,j-1} \frac{\partial g}{\partial q_3}(x_{1,j-1}, [D_hU]_{1,j-1}) \\
& + M_{i,j+1} \frac{\partial g}{\partial q_4}(x_{i,j+1}, [D_hU]_{i,j+1}) - M_{i,j} \frac{\partial g}{\partial q_4}(x_{i,j}, [D_hU]_{i,j})  \end{aligned}\right)
\end{aligned}\right).}
\]
\indent This yields the following scheme:
\begin{equation}\label{discrete_FPequation}
\left\{
\begin{array}{ll}
D_{\Delta t}^{\alpha} M^{n+1}_{i,j}-(\Delta_h M^{n+1})_{i,j} - \bB_{i,j}(U^{n},M^{n+1})= 0, \quad &n=0,\dots, N-1,\, 0\le i,j\le N_h,\\[6pt]
M_{i,j}^{0}=  \frac 1 {h^2} \int_{|x-x_{i,j}|_{\infty}\le h/2} m_0(x) dx&0\le i,j\le N_h.\\
\end{array}
\right.
\end{equation}
for a given vector $U=  (U^n_{i,j})_{ 1\le n\le N}$. \\
Collecting \eqref{discrete_HJequation} and\eqref{discrete_FPequation}, we end up with the following scheme for \eqref{eq:frac_mfg}
\begin{equation}\label{eq:scheme}
\left\{
 \begin{array}{ll}
\bar D_{\Delta t}^{\alpha} U_{i,j}^{n} - (\Delta_h U^{n})_{i,j} + g(x_{i,j},[D_hU^{n}]_{i,j}) = f_h[M^{n+1}]_{i,j} ,\\[6pt]
D_{\Delta t}^\alpha M_{i,j}^{n+1} - (\Delta_h M^{n+1})_{i,j} - \bB_{i,j}(U^{n},M^{n+1}) = 0 ,\\[6pt]
U^{N_T}_{i,j} = u_T(x_{i,j}), \,   M_{i,j}^{0}=  \frac 1 {h^2} \int_{|x-x_{i,j}|_{\infty}\le h/2} m_0(x) dx,
\end{array}
\right.
\end{equation}
for $n=0,\dots,N-1$ and $0\le i,j\le N_h$.\\
\indent We prove  some results  for the problems \eqref{discrete_HJequation} and \eqref{discrete_FPequation} which will be used in the following. 
\begin{lemma}
\label{le:existence} Let $V$ be a grid function on $\T_h^2$ and $\delta$ be a positive parameter. Then, there exists a unique grid function $U$ such that
\begin{equation}\label{eq:stationary}
\delta U_{i,j} - (\Delta_hU)_{i,j} + g(x_{i,j},[D_hU]_{i,j}) = V_{i,j}.
\end{equation}
\end{lemma}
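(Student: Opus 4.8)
The equation \eqref{eq:stationary} is a system of $N_h^2$ nonlinear equations for the $N_h^2$ unknowns $U_{i,j}$, so the plan is to study the map
\[
\Phi(U)_{i,j} = \delta U_{i,j} - (\Delta_h U)_{i,j} + g(x_{i,j},[D_h U]_{i,j}),
\]
viewed as a continuous map on the finite-dimensional space of grid functions (continuity comes from (G3)), and to show $\Phi(U)=V$ has exactly one solution. I would prove uniqueness by a discrete comparison principle and existence by a topological degree argument anchored at the linear case, both resting on the monotonicity (G1) and consistency (G2) of $g$.

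For uniqueness, suppose $U$ and $\tilde U$ both solve \eqref{eq:stationary} with the same $V$, set $W=U-\tilde U$, and let $(i_0,j_0)$ be a point where $W$ attains its maximum. At such a point the five-point stencil gives $(\Delta_h W)_{i_0,j_0}\le 0$, while the one-sided differences entering $[D_h W]_{i_0,j_0}$ satisfy $(D_1^+W)_{i_0,j_0}\le0$, $(D_1^+W)_{i_0-1,j_0}\ge0$, $(D_2^+W)_{i_0,j_0}\le0$, $(D_2^+W)_{i_0,j_0-1}\ge0$. Writing the difference of the two Hamiltonian terms as the integral $\int_0^1 \nabla_q g\big(x_{i_0,j_0},[D_h\tilde U]_{i_0,j_0}+s[D_hW]_{i_0,j_0}\big)\cdot[D_hW]_{i_0,j_0}\,ds$ and invoking (G1) — so that $\partial g/\partial q_1,\partial g/\partial q_3\le0$ and $\partial g/\partial q_2,\partial g/\partial q_4\ge0$ — each of the four products in the integrand is nonnegative, whence this term is $\ge0$. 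Subtracting the two equations at $(i_0,j_0)$ then forces $\delta W_{i_0,j_0}\le0$, so $\max W\le0$; the symmetric argument at a minimum gives $W\equiv0$.

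The same sign bookkeeping yields the a priori bound needed for existence. If $U$ solves \eqref{eq:stationary} and $(i_0,j_0)$ is a maximum point of $U$ itself, then $(\Delta_h U)_{i_0,j_0}\le0$ and $[D_hU]_{i_0,j_0}$ has its first and third entries $\le0$ and its second and fourth entries $\ge0$; monotonicity (G1) and consistency (G2) then give $g(x_{i_0,j_0},[D_hU]_{i_0,j_0})\ge g(x_{i_0,j_0},0,0,0,0)=H(x_{i_0,j_0},0)\ge -C_0$, where $C_0=\max_{x\in\T^2}|g(x,0,0,0,0)|$ is finite by continuity on the compact torus. Hence $\delta\max U\le\|V\|_\infty+C_0$, and a symmetric estimate at a minimum gives $\|U\|_\infty\le(\|V\|_\infty+C_0)/\delta$.

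For existence I would run the homotopy $\Phi_\tau(U)_{i,j}=\delta U_{i,j}-(\Delta_h U)_{i,j}+\tau\,g(x_{i,j},[D_hU]_{i,j})$, $\tau\in[0,1]$. Since $\tau g$ still satisfies (G1) and $\tau|g(x,0,0,0,0)|\le C_0$, the estimate above holds uniformly in $\tau$, so no zero of $\Phi_\tau(\cdot)-V$ lies on $\partial B_R$ once $R>(\|V\|_\infty+C_0)/\delta$. By homotopy invariance, the Brouwer degree of $\Phi_\tau-V$ on $B_R$ is independent of $\tau$; at $\tau=0$ the map is affine with linear part $\delta I-\Delta_h$, which is symmetric positive definite (as $\delta>0$ and $-\Delta_h\ge0$), so its degree equals $1$. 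Therefore the degree at $\tau=1$ is nonzero, which produces a solution of $\Phi(U)=V$. The main obstacle is the Hamiltonian step: $\Phi$ is neither a gradient map nor monotone in the Hilbert-space sense, so variational and monotone-operator methods do not apply directly, and the whole argument hinges on correctly matching the signs in (G1) with the signs of the one-sided differences in $[D_h\cdot]$ at a discrete extremum. Once that sign analysis is secured, the comparison principle, the a priori bound, and the degree continuation all follow in a routine way.
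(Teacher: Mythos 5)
Your proof is correct, and it is worth noting that the paper does not actually prove this lemma at all: immediately after the statement the authors write that they refer to \cite[Lemma 1]{acd}, so your argument supplies details the paper imports wholesale. Your route --- uniqueness via a discrete comparison principle obtained by matching the monotonicity signs of (G1) against the signs of the four one-sided differences in $[D_h\cdot]$ at a discrete extremum, an a priori bound $\|U\|_\infty\le(\|V\|_\infty+C_0)/\delta$, and existence by Brouwer-degree continuation anchored at the symmetric positive definite linear problem $\delta I-\Delta_h$ --- is the standard one for monotone schemes of this type and uses the same ingredients as the cited proof. Your sign bookkeeping is right for the paper's conventions: in $[D_h U]_{i,j}$ the first and third entries are forward differences at $(i,j)$ (nonpositive at a maximum, and $g$ is nonincreasing in $q_1,q_3$) while the second and fourth are forward differences at the left/lower neighbor (nonnegative at a maximum, $g$ nondecreasing in $q_2,q_4$), and periodicity of $\T^2_h$ ensures every grid point has its full five-point stencil, so no boundary case arises. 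One refinement: consistency (G2) is not really needed anywhere in your argument --- your a priori bound only uses $C_0=\max_x|g(x,0,0,0,0)|$, finite by (G3) and compactness of $\T^2$, which is fortunate since the homotopy $\tau g$ preserves (G1) and this bound but is not consistent with $H$ for $\tau<1$. Finally, observe that your uniqueness computation, run for sub- and supersolutions rather than two solutions, gives the comparison principle and the nonexpansiveness estimate \eqref{eq:lip0} for the solution map of \eqref{eq:stationary}, which the paper later invokes in Propositions \ref{prop:ex_HJ} and \ref{prop:lip_U} ``by standard monotonicity argument''; so your sign analysis is doing double duty for the rest of Section \ref{sec:scheme}.
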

For the proof of the previous result, we refer  to  \cite[Lemma 1]{acd}.
\begin{proposition}\label{prop:ex_HJ}
There exists a  function $U=(U^n)_{n=0,\dots,N}$  which satisfies \eqref{discrete_HJequation}. Moreover, if $U= (U^n)_{ 0\le n\le N }$, $V= (U^n)_{ 0\le n\le N }$ are such that $U^N\le V^N$ and 
\begin{align*}
\bar D_{\Delta t}^{\alpha} U_{i,j}^{n} - (\Delta_h U^{n})_{i,j} + g(x_{i,j},[D_hU^{n}]_{i,j}) \le f_h[M^{n+1}]_{i,j},\quad n=0,\dots,N-1,\\
\bar D_{\Delta t}^{\alpha} V_{i,j}^{n} - (\Delta_h V^{n})_{i,j} + g(x_{i,j},[D_hV^{n}]_{i,j}) \ge  f_h[M^{n+1}]_{i,j},\quad n=0,\dots,N-1,
\end{align*}
then $U^n\le V^n$ for any $n=0,\dots,N$.
\end{proposition}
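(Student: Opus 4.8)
The proposition combines two claims: existence of a solution to the backward scheme \eqref{discrete_HJequation}, and a comparison principle between discrete sub- and supersolutions. The plan is to treat both by backward induction in the time index $n$, exploiting that the backward discrete Caputo derivative
\[
\bar D_{\Delta t}^{\alpha} U^n_{i,j} = \frac{1}{\rho_\alpha}\Big( U^n_{i,j} - \sum_{k=n+1}^{N} \bc_n^k\, U^k_{i,j}\Big)
\]
couples the unknown at step $n$ only to the values $U^{n+1},\dots,U^{N}$ computed at later (hence, in the backward sweep, already known) steps.

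For existence I start from the prescribed terminal value $U^N=u_T$ and construct $U^{N-1},\dots,U^0$ recursively. At step $n$ the terms $U^{n+1},\dots,U^N$ are known, so \eqref{discrete_HJequation} rewrites as
\[
\frac{1}{\rho_\alpha}\, U^n_{i,j} - (\Delta_h U^n)_{i,j} + g(x_{i,j},[D_h U^n]_{i,j}) = V_{i,j},
\]
where $V_{i,j} := f_h[M^{n+1}]_{i,j} + \frac{1}{\rho_\alpha}\sum_{k=n+1}^{N}\bc_n^k\,U^k_{i,j}$ is a known grid function. This is exactly the stationary problem \eqref{eq:stationary} with $\delta = 1/\rho_\alpha>0$, so Lemma \ref{le:existence} produces a (unique) grid function $U^n$. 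Iterating down to $n=0$ yields the desired $U$.

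For the comparison principle I set $W^n = U^n - V^n$ and prove by backward induction that $W^n\le 0$ componentwise, the base case $n=N$ being the hypothesis $U^N\le V^N$. Assuming $W^k\le 0$ for all $k\ge n+1$, subtracting the supersolution inequality from the subsolution inequality at step $n$ gives
\[
\bar D_{\Delta t}^{\alpha} W^n_{i,j} - (\Delta_h W^n)_{i,j} + g(x_{i,j},[D_h U^n]_{i,j}) - g(x_{i,j},[D_h V^n]_{i,j}) \le 0.
\]
I then evaluate this at a point $(i_0,j_0)$ where $W^n$ attains its maximum $\mu := \max_{i,j} W^n_{i,j}$ and argue by contradiction, supposing $\mu>0$.

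The crux is to show that each of the three terms is nonnegative at $(i_0,j_0)$, so that their sum contradicts the displayed inequality. For the fractional term, positivity of the coefficients $\bc_n^k$ (Lemma \ref{lem:c_properties}(i)) together with the induction hypothesis $W^k_{i_0,j_0}\le 0$ gives $\bar D_{\Delta t}^{\alpha} W^n_{i_0,j_0} \ge \mu/\rho_\alpha > 0$. For the diffusion term, since the grid is periodic on $\T^2_h$ every point has all four neighbours, and at a maximum the five-point stencil yields $-(\Delta_h W^n)_{i_0,j_0}\ge 0$. The delicate point is the Hamiltonian term: at the maximum of $W^n$ the entries of $[D_hU^n]$ and $[D_hV^n]$ satisfy $q_1^U\le q_1^V$, $q_3^U\le q_3^V$ and $q_2^U\ge q_2^V$, $q_4^U\ge q_4^V$, so the monotonicity assumption (G1) — $g$ nonincreasing in $q_1,q_3$ and nondecreasing in $q_2,q_4$ — forces $g(x_{i_0,j_0},[D_hU^n]_{i_0,j_0}) \ge g(x_{i_0,j_0},[D_hV^n]_{i_0,j_0})$. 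Summing these three nonnegative contributions contradicts the inequality, hence $\mu\le 0$, i.e. $U^n\le V^n$, which closes the induction. I expect the sign bookkeeping in this last step — matching each directional difference to the correct monotonicity slot of $g$ — to be the main place where care is required.
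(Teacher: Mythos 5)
Your proposal is correct and follows essentially the same route as the paper: existence by backward induction, invoking Lemma \ref{le:existence} with $\delta = 1/\rho_\alpha$ and the same right-hand side $V$, and comparison from the monotonicity of $g$ together with the positivity of the coefficients $\bc_n^k$. The only difference is that the paper simply cites a comparison principle for the stationary problem \eqref{eq:stationary}, whereas you prove it explicitly via a discrete maximum-principle argument at the maximum point of $U^n-V^n$, with correct sign bookkeeping in the slots $q_1,\dots,q_4$ against assumption (G1).
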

\begin{proof}
The existence and uniqueness of $U^{n}$, $n=0, \dots, N-1$ is obtained by induction: at each step, we use Lemma \ref{le:existence} with
\[
\delta = \frac{1}{\rho_\alpha},\qquad V_{i,j} =  \frac{1}{\rho_\alpha} \sum_{k=n+1}^{N} \bar c_{n}^k U_{i,j}^k + f_h[M^{n+1}]_{i,j}.
\]
\indent The second part of the statement follows immediately by a comparison principle for \eqref{eq:stationary}, which relies on the monotonicity of $g$  and the positivity of the coefficients $\bc_n^k$. 
\end{proof}
To prove that a solution of \eqref{discrete_HJequation} is uniformly Lipschitz continuous, we need the following  fractional version of the discrete Gronwall Lemma (see \cite[Lemma 3.1]{lwz}).
\begin{lemma}
	Suppose that the nonnegative sequences $\{\go^n,g^n\}_{n\in\N}$ satisfy  $D_{\Delta t}^\alpha\go^1\le \gl_1 \go^1+\gl_2 \go^0+g^1$ and
	\[D_{\Delta t}^\alpha \go^n\le \gl_1\go^n+\gl_2\go^{n-1}+\gl_3\go^{n-2}+g^n,\quad n\ge 2 \]
where $\gl_1,\gl_2,\gl_3$ are given nonnegative constants independent of $\Dt$. Then, there exists $\delta>0$ such for $\Dt<\delta$,
\begin{equation}\label{eq:discrete_Gronwall}
\go^n\le \left(\go^0+\frac{(n\Dt)^\alpha}{\Gamma(1+\alpha)}\max_{1\le j \le n}g^j\right)E_\alpha(2\lambda (n\Dt)^\alpha ),\qquad 1\le n\le N,
\end{equation}
where $E_\alpha(z)=\sum_{k=0}^\infty z^k/\Gamma(1+k\alpha)$ is the Mittag-Leffler function and $\gl=\gl_1+\gl_2/(2-2^{1-\alpha})+\gl_3/(2\cdot2^{1-\alpha}-1^{1-\alpha}-3^{1-\alpha})$.
\end{lemma}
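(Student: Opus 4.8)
The plan is to first absorb the two lagged terms $\gl_2\go^{n-1}$ and $\gl_3\go^{n-2}$ into the memory sum of the L1 operator, thereby reducing the hypothesis to a \emph{single} memory-term fractional Gronwall inequality whose effective constant is exactly the $\gl$ of the statement, and then to dominate the resulting recursion by a Mittag--Leffler majorant via a discrete comparison argument.

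\emph{Reduction step.} Writing the L1 operator as in \eqref{eq:discr_frac_deriv_forw}, namely $\Da\go^n=\frac1\ra\big(\go^n-S_n\big)$ with $S_n:=\sum_{k=0}^{n-1}c_k^n\go^k$, I would use that every summand is nonnegative, so $S_n\ge c_{n-1}^n\go^{n-1}$ and $S_n\ge c_{n-2}^n\go^{n-2}$. A direct evaluation gives, for $n\ge2$, $c_{n-1}^n=2-2^{1-\alpha}$ and $c_{n-2}^n=2\cdot2^{1-\alpha}-1^{1-\alpha}-3^{1-\alpha}$, while the boundary coefficients $c_0^1=1$ and $c_0^2=2^{1-\alpha}-1$ are no smaller; hence these two values are uniform-in-$n$ lower bounds. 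Consequently $\go^{n-1}\le S_n/(2-2^{1-\alpha})$ and $\go^{n-2}\le S_n/(2\cdot2^{1-\alpha}-1^{1-\alpha}-3^{1-\alpha})$, and substituting into the hypothesis collapses it to
\[
\Da\go^n\ \le\ \gl_1\go^n+(\gl-\gl_1)\,S_n+g^n,
\]
with precisely the constant $\gl$ of the statement (the case $n=1$ is covered since its coefficient $c_0^1=1$ is dominated by $1/(2-2^{1-\alpha})$).

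\emph{Majorant and comparison.} Set $\Phi^n:=\big(\go^0+\frac{(n\Dt)^\alpha}{\Gamma(1+\alpha)}\,\overline{g}^{\,n}\big)E_\alpha\big(2\gl(n\Dt)^\alpha\big)$, with $\overline{g}^{\,n}:=\max_{1\le j\le n}g^j$, and prove $\go^n\le\Phi^n$ by induction on $n$, the base case $\go^0=\Phi^0$ being immediate. The inductive step reduces to showing that $\Phi^n$ is a discrete supersolution of the reduced inequality, i.e. $\Da\Phi^n\ge\gl_1\Phi^n+(\gl-\gl_1)\sum_{k=0}^{n-1}c_k^n\Phi^k+g^n$. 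Granting this, I multiply the reduced inequality by $\ra$ and use that, for $\Dt<\delta$ with $1-\ra\gl_1>0$, the resulting expression is monotone in $S_n$ and $g^n$; together with the nonnegativity of the $c_k^n$ this propagates the ordering $\go^k\le\Phi^k$ ($k<n$) to $\go^n\le\Phi^n$.

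\emph{Main obstacle.} The crux is the discrete supersolution estimate, the discrete counterpart of the continuous identities $\pdc t^\alpha=\Gamma(1+\alpha)$ and $\pdc E_\alpha(\mu t^\alpha)=\mu E_\alpha(\mu t^\alpha)$. One must establish $\Da(n\Dt)^\alpha\ge\Gamma(1+\alpha)$ and the discrete super-eigenfunction inequality $\Da E_\alpha(2\gl(n\Dt)^\alpha)\ge 2\gl\,E_\alpha(2\gl(n\Dt)^\alpha)$ up to an L1 consistency error; the surplus produced by inflating the exponent from $\gl$ to $2\gl$ is exactly what absorbs both this error and the lagged sum, since $(\gl-\gl_1)\sum_k c_k^n\Phi^k\le(\gl-\gl_1)\Phi^n$ by monotonicity of $\Phi$ and $\sum_k c_k^n=1$. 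Because $t\mapsto E_\alpha(2\gl t^\alpha)$ has a weakly singular derivative at $t=0$, controlling this truncation error is the delicate point and fixes the threshold $\delta$; this is the coefficient analysis carried out in \cite{lwz}. An equivalent route, avoiding the explicit majorant, is to convolve the reduced inequality with the complementary discrete kernels dual to the L1 weights and then iterate, the bound $\sum_j P_{n,j}\le(n\Dt)^\alpha/\Gamma(1+\alpha)$ generating the Mittag--Leffler series term by term.
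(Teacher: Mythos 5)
The paper itself gives no proof of this lemma: it is imported verbatim, with the citation \cite[Lemma 3.1]{lwz}, so your attempt has to be measured against that source. Your reduction step is correct and is in fact exactly the mechanism that produces the constant $\gl$ there: since $c_k^n>0$ and $\go^k\ge 0$, the memory sum $S_n=\sum_{k=0}^{n-1}c_k^n\go^k$ dominates $c_{n-1}^n\go^{n-1}$ and $c_{n-2}^n\go^{n-2}$, and your evaluations $c_{n-1}^n=2-2^{1-\alpha}$, $c_{n-2}^n=2\cdot 2^{1-\alpha}-1-3^{1-\alpha}$ (for $n\ge 3$), together with the checks $c_0^1=1\ge 2-2^{1-\alpha}$ and $c_0^2=2^{1-\alpha}-1\ge 2\cdot 2^{1-\alpha}-1-3^{1-\alpha}$ (equivalent to $3^{1-\alpha}\ge 2^{1-\alpha}$), are all accurate. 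So the collapse to $\Da\go^n\le \gl_1\go^n+(\gl-\gl_1)S_n+g^n$ is airtight and matches the cited proof.

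The second half, however, is a sketch with a genuine gap, and the gap sits precisely where you write ``granting this.'' The discrete supersolution inequality for $\Phi^n$ is the entire analytic content of the lemma, and your deferral of it to \cite{lwz} is misattributed: that paper never proves a super-eigenfunction inequality $\Da E_\alpha(2\gl t_n^\alpha)\ge 2\gl E_\alpha(2\gl t_n^\alpha)$. Its induction operates directly on the partial sums of the Mittag--Leffler series, via coefficient estimates of the form $\ra\sum_k c_k^n\, t_k^{j\alpha}\lesssim \frac{\Gamma(1+j\alpha)}{\Gamma(1+(j+1)\alpha)}\, t_n^{(j+1)\alpha}$, which generate the series term by term; the factor $2$ in $E_\alpha(2\gl t_n^\alpha)$ and the threshold $\delta$ come from absorbing the prefactor $(1-\ra\gl_1)^{-1}\le 2$ once $\Dt$ is small enough that $\ra\gl\le 1/2$ --- not, as you assert, from controlling the L1 truncation error of $E_\alpha$. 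Your proposed inequality is moreover doubtful as stated: the L1 operator is the Caputo derivative of the piecewise linear interpolant, so its truncation term has a sign governed by convexity, and $E_\alpha(2\gl t^\alpha)$ is concave near $t=0$ (where the error helps you) but eventually convex (where it works against you), so the inequality cannot be claimed ``up to an error'' without a quantitative bound you never supply; in addition there is no discrete Leibniz rule allowing you to differentiate the product $\bigl(\go^0+t_n^\alpha\,\overline{g}^{\,n}/\Gamma(1+\alpha)\bigr)E_\alpha(2\gl t_n^\alpha)$ factor by factor, as your appeal to $\Da t_n^\alpha\ge\Gamma(1+\alpha)$ implicitly requires. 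The complementary-kernel route you mention in your last sentence is indeed a complete standard argument and would be the cleanest way to finish, but as written your proof stops exactly where the real work begins.
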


\begin{proposition}\label{prop:lip_U}
	 Let $U=(U^n)_{n=0,\dots,N}$ be the solution of \eqref{discrete_HJequation}. Then, for $\Dt$ sufficiently small, there exists a positive constant $L$ such that
	\begin{equation}\label{eq:lip_est}
	\|D_hU^n\|_\infty \le L, \qquad  n=0,\dots,N.
	\end{equation}
\end{proposition}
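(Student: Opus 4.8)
The plan is to establish a Lipschitz bound on the discrete gradient $\|D_hU^n\|_\infty$ by estimating the discrete spatial differences of the solution and applying the fractional discrete Gronwall lemma. The key idea is that the numerical Hamilton-Jacobi-Bellman scheme \eqref{discrete_HJequation} inherits a discrete comparison/maximum-principle structure (Proposition \ref{prop:ex_HJ}), and differences of the solution along a spatial direction satisfy a similar scheme whose right-hand side is controlled by the Lipschitz regularity of the data $f_h$ (assumption (F3)) and of the numerical Hamiltonian $g$.

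\textbf{Main strategy.}
First I would fix a unit coordinate direction and consider the shifted grid function, say $V^n_{i,j}=U^n_{i+1,j}$, together with the first difference $W^n_{i,j}=U^n_{i+1,j}-U^n_{i,j}$. Writing the scheme \eqref{discrete_HJequation} at the points $(i+1,j)$ and $(i,j)$ and subtracting, the linear operators $\bar D_{\Delta t}^\alpha$ and $\Delta_h$ act componentwise and commute with the spatial shift, so $\bar D_{\Delta t}^\alpha W^n-(\Delta_hW^n)$ equals the difference of the two Hamiltonian terms plus the difference of the coupling terms. Using (G3) (so that $g$ is $C^1$) together with a discrete mean-value argument, the Hamiltonian difference $g(x_{i+1,j},[D_hU^n]_{i+1,j})-g(x_{i,j},[D_hU^n]_{i,j})$ can be written as a term linear in the entries of $[D_hW^n]_{i,j}$ (with coefficients $g_{q_\ell}$ evaluated at intermediate arguments) plus an $x$-increment contribution bounded by the spatial Lipschitz constant of $g$ times $h$. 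Combined with the bound $|f_h[M^{n+1}]_{i+1,j}-f_h[M^{n+1}]_{i,j}|\le Ch$ from (F3), this shows that $W^n/h$ satisfies a discrete inequality of the type handled by the comparison principle.

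\textbf{Applying Gronwall.}
The next step is to convert this into a scalar inequality for $\go^n:=\|D_hU^n\|_\infty$, or more precisely for a suitable norm of the full difference vector $[D_hU^n]$, so that the fractional discrete Gronwall lemma applies. Here I would test against an appropriate functional and exploit the monotonicity conditions (G1) on $g_{q_\ell}$ to absorb the terms involving $[D_hW^n]$ with favorable signs into the diffusion term, leaving a controlled right-hand side of the form $\lambda_1\go^n+\lambda_2\go^{n-1}+g^n$ with $g^n\le C$ independent of $h$. Since the scheme is backward in time, I would first reverse the time index (the backward Caputo derivative $\bar D^\alpha_{\Delta t}$ becomes a forward one after reindexing, using Lemma \ref{lem:c_properties}(ii)) so that the forward-in-time Gronwall lemma is directly usable with initial datum $\go^0$ governed by the Lipschitz regularity of the terminal condition $u_T$. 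Applying \eqref{eq:discrete_Gronwall} then yields a bound on $\go^n$ by $(\go^0+C\,T^\alpha/\Gamma(1+\alpha))E_\alpha(2\lambda T^\alpha)$, which is the desired constant $L$, uniform in $n$ and in the discretization parameters for $\Delta t$ small.

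\textbf{Main obstacle.}
The delicate point will be the treatment of the nonlinear Hamiltonian difference so as to produce a genuinely sign-definite or absorbable structure rather than an inequality that merely bounds $\go^n$ by itself. Specifically, one must handle the terms $g_{q_\ell}\cdot[D_hW^n]$ carefully: the monotonicity (G1) ensures that, when testing, the contributions from these terms have the right sign to be dominated by the discrete Laplacian (a discrete analogue of the ellipticity that controls $|Du|$ via a Bernstein-type estimate), so that only the lower-order $x$-increment and coupling terms remain on the right-hand side. Making this absorption rigorous at the discrete level — and verifying that the constants $\lambda_1,\lambda_2$ entering Gronwall are independent of $h$ — is the crux of the argument; the remaining manipulations with the L1 coefficients and the Mittag-Leffler bound are routine once this structure is in place.
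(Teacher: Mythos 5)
Your proposal follows the same backbone as the paper's proof --- spatial shift differences of the solution, a comparison/monotonicity argument to control them, and the backward fractional discrete Gronwall lemma with $\go^n=\|D_hU^n\|_\infty$, $\gl_1=C$, $g^n=C$, and terminal datum given by the Lipschitz constant of $u_T$ --- but the implementation differs in one respect. The paper never linearizes the Hamiltonian: it rewrites each time step as the stationary problem $U^n_{i,j}-\ra(\Delta_h U^n)_{i,j}+\ra g-\ra f_h=\sum_{k=n+1}^N\bc_n^k U^k_{i,j}$, introduces the solution map $\Psi$, and uses its two structural properties ($\Psi(V+\lambda)=\Psi(V)+\lambda$ and $\|\Psi(V)-\Psi(W)\|_\infty\le\|V-W\|_\infty$, the latter from monotonicity) applied to general translates $(\tau U)^n_{i,j}=U^n_{i+l,j+\kappa}$; the only residual is the shift error $E$, bounded by (F3) and the $x$-regularity of $g$ as $\|E\|_\infty\le C(1+\|D_hU^n\|_\infty)h\sqrt{l^2+\kappa^2}$ (note the factor $1+\|D_hU^n\|_\infty$, which is what produces the $\gl_1\go^n$ term in Gronwall --- your ``spatial Lipschitz constant of $g$ times $h$'' should be stated with this $q$-dependence). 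Your mean-value linearization of $g$ plus a discrete maximum principle for the resulting frozen-coefficient scheme is a legitimate alternative and yields the same differential inequality; the two routes buy essentially the same thing, with the paper's $\Psi$-map version being slightly cleaner because the nonlinear structure is handled once and for all by nonexpansivity.

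One correction to your stated ``main obstacle'': no Bernstein-type absorption of the terms $g_{q_\ell}\cdot[D_hW^n]$ into the discrete Laplacian is needed, and insisting on literal domination of first-order differences by $\Delta_h$ would fail (the constants degenerate like $1/h$). The valid mechanism --- which your own text in fact gestures at --- is that the sign conditions (G1) ($g_{q_1},g_{q_3}\le 0$, $g_{q_2},g_{q_4}\ge 0$, preserved under the averaging in the mean-value formula) make the linearized first-order part an upwind discretization, so that the full implicit operator has the M-matrix sign pattern and the comparison principle applies directly, with or without the Laplacian. This is exactly the monotonicity that the paper packages into the inequality $\|\Psi(V)-\Psi(W)\|_\infty\le\|V-W\|_\infty$; once it is recognized, the step you flag as the crux is routine, and the rest of your argument (reindexing the backward Caputo derivative via Lemma \ref{lem:c_properties}(ii) and invoking \eqref{eq:discrete_Gronwall}) matches the paper.
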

\begin{proof}
	Rewrite \eqref{discrete_HJequation} as the stationary equation
\begin{equation*} 
	 U_{i,j}^{n} - \rho_\alpha (\Delta_h U^{n})_{i,j} + \rho_\alpha g(x_{i,j},[D_hU^{n}]_{i,j}) - \rho_\alpha f_h[M^{n+1}]_{i,j}= \sum_{k=n+1}^{N} \bc_{n}^k U_{i,j}^k
	 \end{equation*} 
and let  $\Psi$ be the map $U_{i,j}^n=\Psi( \sum_{k=n+1}^{N} \bc_{n}^k U_{i,j}^k)$ that associates to right hand side of the previous equation the corresponding solution. For $V \in \R^{N_h}$ and $\lambda\in\R$, we have $\Psi( V+\lambda)=\Psi( V)+\lambda$
(where, on the left side of the previous identity, $\lambda$ is interpreted as the constant function on $\R^{N_h}$)
and, by standard monotonicity argument,
\begin{equation}\label{eq:lip0}
\|\Psi(V)-\Psi(W)\|_\infty\le  \|V-W\|_\infty.
\end{equation}
for any $V, W\in \R^{N_h}$. For fixed $l,\kappa \in \Z^2$, call $\tau U$ the discrete function defined by
\[
(\tau U)^n_{i,j} = U^n_{l+i,\kappa+j}.
\]
We have that
\[ 
\bar D_{\Delta t}^{\alpha} (\tau  U)_{i,j}^{n} - (\Delta_h (\tau U)^{n})_{i,j} + g(x_{i,j},[D_h(\tau U)^{n}]_{i,j})= f_h[M^{n+1}]_{i,j}+ E_{i,j} 
\]
where
\begin{align*}
E_{i,j}& =   f_h[M^{n+1}]_{i+l,j+\kappa} -f_h[M^{n+1}]_{i,j}  - g(x_{i+l,j+\kappa},[D_h(\tau U)^{n}]_{i,j}) + g(x_{i,j},[D_h(\tau U)^{n}]_{i,j}).
\end{align*} 
Hence $ (\tau U)^n=  \Psi\left(\sum_{k=n+1}^{N} \bc_{n}^k (\tau U)^k+\rho_\alpha E\right)$. Moreover, since 
\begin{align*}
&f_h[M^{n+1}]_{i+l,j+\kappa} -f_h[M^{n+1}]_{i,j} \le C h \sqrt{l^2+\kappa^2},\\
&g(x_{i,j},[D_h(\tau U)^{n}]_{i,j}) - g(x_{i+l,j+\kappa},[D_h(\tau U)^{n}]_{i,j}) \le C(1+\|D_hU^n\|_\infty)h\sqrt{l^2+\kappa^2}
\end{align*}
it follows that
\[
\|E\|_\infty \le C(1+\|D_hU^n\|_\infty)h\sqrt{l^2+\kappa^2}.
\]
By \eqref{eq:lip0}, we have
\begin{align*}
&\|(\tau U)^n-U^n\|_\infty= \left\|\Psi\left(\sum_{k=n+1}^{N} \bc_{n}^k (\tau U)^k+\rho_\alpha E\right)
-\Psi\left(\sum_{k=n+1}^{N} \bc_{n}^k  U^m\right)\right\|_\infty\\
&\le \left\|\sum_{k=n+1}^{N} \bc_{n}^k( (\tau U)^k - U^k)\right\|_\infty+\ra\|E\|_\infty.
\end{align*} 
Hence
\[
\left\|\frac{(\tau U)^n - U^n}{h \sqrt{l^2+\kappa^2} }\right\|_\infty \le   \sum_{k=n+1}^{N}\bc_{n}^k\left\| \frac{(\tau U)^k - U^k}{h \sqrt{l^2+\kappa^2}}\right\|_\infty + C \rho_\alpha (1+ \|D_hU^{n}\|_\infty).
\]
For the arbitrariness of $l,\kappa$,  the previous inequality implies that
\[
\bar D_{\Delta t}^{\alpha}(\|D_hU^n\|_\infty)\le C\|D_hU^n\|_\infty+C
\]
and therefore, applying  a backward version of the discrete Gronwall's inequality \eqref{eq:discrete_Gronwall} with $\go^n=\|D_hU^n\|_\infty$, $\gl_1=C$, $\gl_2=\gl_3=0$ and $g^n=C$, we obtain \eqref{eq:lip_est} for some positive constant $L$.
\end{proof}
\begin{proposition}\label{prop:ex_FP}
	For $\Dt$ sufficiently small,   there exists a unique  solution $(M^n)_{n=0}^N$  to \eqref{discrete_FPequation}. Moreover, $M^n\ge 0$ and  $(M^n,\bar 1)_2=1$ for any $n=0,\dots,N$.
\end{proposition}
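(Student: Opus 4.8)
The plan is to construct $(M^n)_{n=0}^N$ inductively in $n$, exploiting the fact that, once $M^0,\dots,M^n$ are known, the equation for $M^{n+1}$ is \emph{linear}. From the explicit form of $\bB_{i,j}$ one reads off that $M\mapsto\bB_{i,j}(U^n,M)$ is linear, so by \eqref{eq:discr_frac_deriv_forw} the $(n+1)$-th equation in \eqref{discrete_FPequation} becomes
\begin{equation*}
\frac{1}{\ra}M^{n+1}_{i,j}-(\Delta_h M^{n+1})_{i,j}-\bB_{i,j}(U^n,M^{n+1})=\frac{1}{\ra}\sum_{k=0}^{n}c_k^{n+1}M^k_{i,j}=:R_{i,j},
\end{equation*}
that is $\mathcal A M^{n+1}=R$, where $\mathcal A M:=\frac{1}{\ra}M-\Delta_h M-\bB(U^n,M)$ and $R$ is already determined by the previous steps.

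First I would show that $\mathcal A$ is invertible, which is the crux of the whole statement. The key tool is the discrete duality \eqref{integralterm}: combined with the self-adjointness of $-\Delta_h$ for the scalar product $(\cdot,\cdot)_2$, it identifies the adjoint of $\mathcal A$ as
\[
\mathcal A^\ast W=\frac{1}{\ra}W-\Delta_h W+g_q(\cdot,[D_hU^n])\cdot[D_hW],
\]
i.e. the linearization of the stationary operator in \eqref{eq:stationary}. By (G1) the entries of $g_q$ satisfy $\partial g/\partial q_1,\partial g/\partial q_3\le0$ and $\partial g/\partial q_2,\partial g/\partial q_4\ge0$; writing $\mathcal A^\ast$ as a matrix, these signs force the diagonal entries to be positive and all off-diagonal entries to be nonpositive, while the Laplacian and advection parts annihilate constants, so every row of $\mathcal A^\ast$ sums to $1/\ra>0$. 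Hence $\mathcal A^\ast$ is a strictly diagonally dominant $M$-matrix and is invertible; equivalently, $\mathcal A^\ast W=0$ implies $W\equiv0$ by evaluating the equation at a point where $W$ is maximal, where $-\Delta_h W\ge0$ and $g_q\cdot[D_hW]\ge0$, and at a minimal point. Invertibility of $\mathcal A^\ast=\mathcal A^T$ yields invertibility of $\mathcal A$, and the induction produces a unique solution $(M^n)_{n=0}^N$.

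For positivity I argue by induction: $M^0\ge0$ since $m_0\ge0$, and if $M^0,\dots,M^n\ge0$ then $R\ge0$ because $c_k^{n+1}>0$ by Lemma~\ref{lem:c_properties}(i). It remains to see that $\mathcal A^{-1}$ is positivity preserving. Since $\mathcal A^\ast$ is a nonsingular $M$-matrix, $(\mathcal A^\ast)^{-1}$ has nonnegative entries, and as $\mathcal A^{-1}=((\mathcal A^\ast)^{-1})^T$ the same holds for $\mathcal A^{-1}$; thus $M^{n+1}=\mathcal A^{-1}R\ge0$. (Equivalently, a discrete minimum principle applied to $\mathcal A^\ast\psi=\phi\ge0$ shows $\psi\ge0$.) Finally, mass conservation follows by induction after testing \eqref{discrete_FPequation} with the constant $\bar 1$: the base case is $(M^0,\bar 1)_2=\int_{\T^2}m_0\,dx=1$, while $(\Delta_h M^{n+1},\bar 1)_2=(M^{n+1},\Delta_h\bar 1)_2=0$ and, by \eqref{integralterm} with $W=\bar 1$, $(\bB(U^n,M^{n+1}),\bar 1)_2=0$ because $[D_h\bar 1]=0$. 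Therefore $(D_{\Delta t}^\alpha M^{n+1},\bar 1)_2=0$, which by Lemma~\ref{lem:c_properties}(iii) gives
\[
(M^{n+1},\bar 1)_2=\sum_{k=0}^{n}c_k^{n+1}(M^k,\bar 1)_2=1.
\]
The main obstacle is the invertibility and positivity of $\mathcal A$; everything rests on extracting the correct sign structure of the adjoint $\mathcal A^\ast$ from (G1), together with the discrete duality \eqref{integralterm} which makes $\mathcal A^\ast$ the linearized Hamilton--Jacobi operator. I note that these maximum-principle arguments are in fact uniform in $\Dt$, so no smallness is essential for the present claims.
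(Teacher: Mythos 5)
Your proof is correct and takes essentially the same approach as the paper: an inductive per-time-step linear system, the $M$-matrix sign structure induced by (G1) for invertibility and for entrywise nonnegativity of the inverse (hence $M^{n+1}\ge 0$), and testing with $\bar 1$ together with the discrete duality \eqref{integralterm} and Lemma \ref{lem:c_properties}(iii) for mass conservation. Your detour through the adjoint (the linearized Hamilton--Jacobi operator) only makes explicit the column-sum/diagonal-dominance verification that the paper leaves implicit, and your closing observation that invertibility and positivity here hold uniformly in $\Dt$ is a correct minor sharpening of the paper's ``for $\Dt$ sufficiently small.''
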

\begin{proof}
	Equation \eqref{discrete_FPequation} is equivalent to
\begin{equation}\label{eq:FP_0}	
	M^{n+1}_{i,j} -\rho_\alpha(\Delta_h M^{n+1})_{i,j} - \rho_\alpha\bB_{i,j}(U^{n},M^{n+1})  =  \sum_{k=0}^{n} c_k^{n+1} M_{i,j}^{k}. 	
\end{equation}
For fixed $n$, \eqref{eq:FP_0}  is a linear problem in the unknown $M^{n+1}\in (\R^{N_h^2})$, which  can be rewritten  as
	\begin{equation}\label{eq:FP_1}
	(I+\rho_\alpha A)M^{n+1}=\sum_{k=0}^{n} c_k^{n+1} M^{k}.
	\end{equation}
where $A$ is the linear operator  given by
	$A\cdot =-  \Delta_h\cdot - \bB_{i,j}(U^{n},\cdot)$. 
We observe that, due to the monotonicity assumption (G1), the matrix corresponding to $A$ has positive diagonal entries and non positive off-diagonal entries. Hence, for $\Dt$ sufficiently small, the matrix corresponding to the linear operator $I+\rho_\alpha A$ in \eqref{eq:FP_1} is a $M$-matrix and therefore non singular. It follows that, for any $n=0,\dots,N-1$, there exists a unique solution to \eqref{eq:FP_1}. 
\indent Moreover, since $M^0\ge 0$ and the inverse of a $M$-matrix is non negative, we easily get by induction that $M^n\ge 0$ for  $n=0,\dots,N-1$. \\
\indent To show that $(M^n,\bar 1)_2=1$ for any $n=0,\dots,N$, we first recall the following identity
(see \cite[eq. (50)]{acd})
\begin{equation}\label{eq:bypart_space}
\begin{aligned}
(AW,Z)_2&=\sum_{i,j}(D^+_1W)_{i,j}(D^+_1Z)_{i,j}+\sum_{i,j}(D^+_2W)_{i,j}(D^+_2Z)_{i,j}\\
&+\sum_{i,j}W_{i,j}[D_h Z]_{i,j}\cdot D_q g(x_{i,j},[D_hU]_{i,j}).
\end{aligned}
\end{equation}
From the previous identity, it follows  that $(AW,\bar 1)_2=0$ for any grid function $W$. Hence, taking the inner product of  \eqref{eq:FP_1}  with the function $\bar 1$, we get that
\[(M^{n+1},\bar 1)_2=\sum_{k=0}^{n } c_k^{n+1} (M^{k},\bar 1)_2.\]
Recalling that $M^0\in \cK_h$ and  Lemma \ref{lem:c_properties}.(iii), by the previous formula we get by induction 
that $M^{n} \in \cK_h$ for any $n=0,\dots,N$.
\end{proof}
\begin{remark}
We can provide an alternative proof to the discrete mass conservation property $(M^n,\bar 1)_2=1$ by the following observation. Since for integrating on the torus $\T^2$ the second equation, noting 
$$
\int_{\T^2}\Delta m-\diver (mH_p(x,Du))dx=0,
$$
so that
\begin{align*}
0&=\int_{\T^2}\partial_{(0,t]}^{\alpha}m(x,t)dx=\frac{1}{\Gamma(1-\alpha)}\int_{\T^2}\int_0^t\frac{1}{(t-s)^{\alpha}}\frac{\partial m(x,s)}{\partial s}dsdx\\
&=\frac{1}{\Gamma(1-\alpha)}\int_0^t\frac{1}{(t-s)^{\alpha}}\big{(}\int_{\T^2}\frac{\partial m(x,s)}{\partial s}dx\big{)}ds,\,\,\,\,\,\forall t\in [0,T].
\end{align*}
Then
$$
\frac{d}{dt}\int_{\T^2}m(x,t)dx=\int_{\T^2}\frac{\partial}{\partial t}m(x,t)dx=0,\,\,\,\,\forall t\in [0,T].
$$
Therefore we have 
$$
\int_{\T^2}m(x,t)dx=\int_{\T^2}m(x,0)dx=1,\,\,\,\,\forall t\in [0,T].
$$
Analogously, for the discrete case we can obtain
$$
\sum_{i,j}(\Delta_h M^{n+1})_{i,j} +\bB_{i,j}(U^{n},M^{n+1})=0
$$
and
\begin{align*}
0=h^2\sum_{i,j}D^{\alpha}_{\Delta t}M_{i,j}^{n+1}=D^{\alpha}_{\Delta t}(M^{n+1},\bar{1})_2,\,\,\,0\leq n\leq N-1.
\end{align*}
By using the equivalent form of L1 scheme for forward Caputo derivative \eqref{discreteCaputo2} we have
$$
\sum_{k=0}^{n}b_k\frac{(M^{n+1-k},\bar{1})_2-(M^{n-k},\bar{1})_2}{\Delta t}=0,\,\,\,0\leq n\leq N-1.
$$
It follows easily by mathematical induction that 
$$
(M^{n},\bar{1})_2=(M^{0},\bar{1})_2=1,\,\,\,1\leq n\leq N.
$$
\end{remark}
\indent The following lemma provides an integration by parts for  \eqref{eq:discr_frac_deriv_forw}, \eqref{eq:discr_frac_deriv_back}  which is the discrete counterpart of
fractional integration by parts formula with Caputo derivative  (see \cite[Lemma 2.2]{q})
\[
\int_{0}^T \partial_{(0,t]}^{\alpha} \phi(t)\kappa(t)dt + \phi(0)(I_{[t,T)}^{1-\alpha}\kappa)(0) = \int_{0}^T \phi(t) \partial_{[t,T)}^{\alpha} \kappa(t)dt + \kappa(T)(I_{(0,t]}^{1-\alpha}\phi)(T),
\]
where $\phi(t), \kappa(t)\in C^1[0,T]$.
\lemma\label{le:by_parts_n_n_plus_1}   Given $U=\{U^n\}_{n=0}^{N}$, $M=\{M^n\}_{n=0}^{N}$, 
then 
\begin{align*}
&\sum_{n=0}^{N-1} \Big(\bar D_{\Delta t}^{\alpha} U^{n}, M^{n+1}\Big)_2 + \frac{1}{\ra}\sum_{n=0}^{N-1} \bar c_n^N \left(U^N,M^{n+1}\right)_2  \\
&= \sum_{n=0}^{N-1} (D_{\Delta t}^{\alpha} M^{n+1},U^{n})_2+ \frac{1}{\ra}\sum_{n=0}^{N-1} c_0^{n+1}\left(M^0,U^n\right)_2.
\end{align*}
For the proof,  see the  Appendix. We also recall the following  result  \cite[Lemma 2.2]{glm}, which will be used  for barrier arguments.
\begin{lemma}\label{le:barrier_argument}
	For $\Dt>0$, the following inequality holds 
	\begin{equation}\label{eq:barrier1}
	\bar D_{\Delta t}^\alpha ((N-n)\Dt)^\alpha\ge \frac{\alpha(1-\alpha)}{\Gamma (2-\alpha)},\qquad n=0,\dots,N-1.
	\end{equation}
\end{lemma}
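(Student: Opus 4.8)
The plan is to reduce the backward estimate to a forward one, to recognise the L1 operator as the genuine Caputo derivative of a piecewise-linear interpolant, and then to compare it termwise with the continuous Caputo derivative of $t^\alpha$ through a correlation (Chebyshev) inequality. The payoff is that the discrete barrier derivative is bounded below not just by the stated constant but by the \emph{continuous} value $\Gamma(1+\alpha)$, from which the assertion follows by an elementary Gamma-function inequality.

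First I would set $W^n:=((N-n)\Dt)^\alpha$, so that $W^N=0$, and substitute it into \eqref{eq:discr_frac_deriv_back}. Since $\ra=\Gamma(2-\alpha)\Dt^\alpha$, the factor $\Dt^\alpha$ cancels and the asserted bound becomes an inequality between pure numbers, uniform in $\Dt$. To exploit the formulas already available, I would pass to the forward derivative: putting $V^m:=W^{N-m}=(m\Dt)^\alpha$ and using the reflection identity $\bc_n^k=c^{N-n}_{N-k}$ (immediate from the definitions, cf. Lemma~\ref{lem:c_properties}(ii)), one checks that $\bar D_{\Delta t}^{\alpha}W^n=D_{\Delta t}^{\alpha}V^{N-n}$. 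Writing $j:=N-n\in\{1,\dots,N\}$, it therefore suffices to prove $D_{\Delta t}^{\alpha}V^{j}\ge \alpha(1-\alpha)/\Gamma(2-\alpha)$.

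The key step uses the representation behind \eqref{discreteCaputo2}: $D_{\Delta t}^{\alpha}V^{j}$ is exactly the Caputo derivative of the piecewise-linear interpolant of the nodal values of $v(t):=t^\alpha$. Since on each interval $(t_m,t_{m+1})$, $t_m:=m\Dt$, the interpolant's slope equals the average of $v'(s)=\alpha s^{\alpha-1}$, this reads
\[
D_{\Delta t}^{\alpha}V^{j}=\frac{1}{\Gamma(1-\alpha)}\sum_{m=0}^{j-1}\Big(\frac{1}{\Dt}\int_{t_m}^{t_{m+1}}v'(\sigma)\,d\sigma\Big)\int_{t_m}^{t_{m+1}}(t_j-s)^{-\alpha}\,ds,
\]
whereas the continuous derivative is $\partial_{(0,t]}^\alpha v(t_j)=\frac{1}{\Gamma(1-\alpha)}\int_0^{t_j}(t_j-s)^{-\alpha}v'(s)\,ds=\Gamma(1+\alpha)$, a direct Beta-integral evaluation. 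On each $(t_m,t_{m+1})$ the factor $v'(s)=\alpha s^{\alpha-1}$ is decreasing while $(t_j-s)^{-\alpha}$ is increasing, so Chebyshev's inequality for oppositely monotone functions gives $\int_{t_m}^{t_{m+1}}v'(s)(t_j-s)^{-\alpha}\,ds\le \frac{1}{\Dt}\int_{t_m}^{t_{m+1}}v'\int_{t_m}^{t_{m+1}}(t_j-s)^{-\alpha}\,ds$. Summing in $m$ yields $D_{\Delta t}^{\alpha}V^{j}\ge \partial_{(0,t]}^\alpha v(t_j)=\Gamma(1+\alpha)$, and the statement follows since $\Gamma(1+\alpha)\Gamma(2-\alpha)=\alpha(1-\alpha)\,\Gamma(\alpha)\Gamma(1-\alpha)\ge\alpha(1-\alpha)$, using $\Gamma(\alpha)\Gamma(1-\alpha)=\pi/\sin(\pi\alpha)\ge\pi$.

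The only delicate point, and the place I expect the real work, is the justification of the termwise Chebyshev inequality on the last subinterval $(t_{j-1},t_j)$, where the kernel $(t_j-s)^{-\alpha}$ is unbounded; here one relies on its integrability and on the fact that Chebyshev's inequality, in the double-integral form $\iint[f(x)-f(y)][g(x)-g(y)]\,dx\,dy\le0$, needs only integrability and monotonicity. A secondary routine check is the reflection identity $\bc_n^k=c_{N-k}^{N-n}$ relating the backward and forward coefficients. Should one prefer to avoid the reflection, the same argument applies verbatim to the backward operator acting on $s\mapsto(T-s)^\alpha$, whose continuous backward Caputo derivative is again the constant $\Gamma(1+\alpha)$.
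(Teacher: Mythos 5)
Your argument is correct, and it in fact proves more than the lemma asserts: the reflection $V^m:=W^{N-m}$ together with $\bc_n^k=c_{N-k}^{N-n}$ (which I checked does follow directly from the two coefficient definitions; note, however, that Lemma~\ref{lem:c_properties}(ii) as stated is the \emph{shift} identity $\bar c_k^n=c_{k+1}^{n+1}$, not this reflection, so your parenthetical ``cf.''\ is slightly off and it is your direct verification that carries the step), the interpretation of the L1 operator as the exact Caputo derivative of the piecewise-linear interpolant (consistent with \eqref{discreteCaputo2} after Abel summation), and the interval-wise Chebyshev inequality for the oppositely monotone pair $v'(s)=\alpha s^{\alpha-1}$ (decreasing) and $(t_j-s)^{-\alpha}$ (increasing) together give $\bar D_{\Dt}^\alpha W^n = D_{\Dt}^\alpha V^{N-n}\ge \Gamma(1+\alpha)$, and $\Gamma(1+\alpha)\Gamma(2-\alpha)=\alpha(1-\alpha)\Gamma(\alpha)\Gamma(1-\alpha)=\alpha(1-\alpha)\pi/\sin(\pi\alpha)\ge \alpha(1-\alpha)$ closes the gap to \eqref{eq:barrier1}. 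Your handling of the singular endpoints is also sound: the form $\iint \bigl(f(x)-f(y)\bigr)\bigl(g(x)-g(y)\bigr)\,dx\,dy\le 0$ requires only integrability of $f$, $g$ and $fg$, which holds on every subinterval, including $(0,t_1)$ where the product is exactly the Beta integrand. As for comparison: the paper gives no proof of this lemma at all, quoting it from \cite[Lemma 2.2]{glm}, where the bound is obtained by direct elementary estimates on the L1 coefficients applied to the power barrier; your route is self-contained and conceptually sharper, since it identifies the mechanism (the L1 scheme \emph{overestimates} the continuous Caputo derivative of any concave increasing function at the grid points, and the continuous derivative of $t^\alpha$ is the constant $\Gamma(1+\alpha)$) and shows the paper's constant $\alpha(1-\alpha)/\Gamma(2-\alpha)=\alpha/\Gamma(1-\alpha)$ is not sharp, being improvable by the factor $\Gamma(\alpha)\Gamma(1-\alpha)\ge\pi$. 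The citation buys brevity; your argument buys a stronger $\Dt$-uniform bound and a reusable discrete-versus-continuous comparison principle that would serve equally well in the barrier constructions of Section~\ref{sec:well_posed}.
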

\section{Well posedness of the discrete problem}\label{sec:well_posed}
In this section, we study the well posedness of the discrete problem \eqref{eq:scheme}.

\subsection{Uniqueness}
The uniqueness  argument is classical in Mean Field Games theory  and relies on the monotonicity of the coupling cost
$f$ and convexity of the discrete Hamiltonian $g$.
\begin{theorem}\label{thm:uniqueness}
The problem  \eqref{eq:scheme} has at most one solution.
\end{theorem}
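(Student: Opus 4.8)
The plan is to run the classical Lasry--Lions monotonicity argument, adapted to the fractional and discrete setting, exploiting the duality built into the scheme. Suppose $(U,M)$ and $(\tilde U,\tilde M)$ both solve \eqref{eq:scheme}, and set $\bar U^n = U^n - \tilde U^n$, $\bar M^n = M^n - \tilde M^n$. The crucial structural observation is that the boundary data cancel: $\bar U^N = u_T - u_T = 0$ and $\bar M^0 = 0$. First I would subtract the two Hamilton--Jacobi--Bellman equations and test (take the $(\cdot,\cdot)_2$ inner product) against $\bar M^{n+1}$, and subtract the two Fokker--Planck equations and test against $\bar U^n$, in both cases summing over $n=0,\dots,N-1$.

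Next I would eliminate the linear terms. For the fractional derivatives, Lemma \ref{le:by_parts_n_n_plus_1} applied to $\bar U,\bar M$ gives $\sum_n(\bar D_{\Delta t}^{\alpha}\bar U^n,\bar M^{n+1})_2=\sum_n(D_{\Delta t}^{\alpha}\bar M^{n+1},\bar U^n)_2$, because the two boundary sums carry the factors $\bar U^N$ and $\bar M^0$, which vanish. The discrete Laplacian is self-adjoint, so $(\Delta_h\bar U^n,\bar M^{n+1})_2=(\Delta_h\bar M^{n+1},\bar U^n)_2$. Subtracting the two tested identities thus cancels the fractional and diffusion contributions and leaves
\[
\sum_{n}\big(g(x,[D_hU^n])-g(x,[D_h\tilde U^n]),\bar M^{n+1}\big)_2+\sum_n\big(\bB(U^n,M^{n+1})-\bB(\tilde U^n,\tilde M^{n+1}),\bar U^n\big)_2=\sum_n\big(f_h[M^{n+1}]-f_h[\tilde M^{n+1}],\bar M^{n+1}\big)_2.
\]

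The heart of the argument, and the step I expect to be the main obstacle, is to show the left-hand side is $\le 0$ by correctly combining the numerical-Hamiltonian term with the transport term $\bB$. Using the duality identity \eqref{integralterm}, $(\bB(U^n,M^{n+1}),\bar U^n)_2=-h^2\sum_{i,j}M^{n+1}_{i,j}\,g_q(x_{i,j},[D_hU^n]_{i,j})\cdot[D_h\bar U^n]_{i,j}$, and likewise for $\tilde U,\tilde M$. Writing $p=[D_hU^n]_{i,j}$ and $\tilde p=[D_h\tilde U^n]_{i,j}$, the pointwise integrand on the left becomes
\[
M^{n+1}_{i,j}\big(g(x,p)-g(x,\tilde p)-g_q(x,p)\cdot(p-\tilde p)\big)+\tilde M^{n+1}_{i,j}\big(g(x,\tilde p)-g(x,p)-g_q(x,\tilde p)\cdot(\tilde p-p)\big).
\]
By the convexity of $g$ in the $q$-variables (G4) each bracket is $\le 0$, and since $M^{n+1}_{i,j},\tilde M^{n+1}_{i,j}\ge 0$ by Proposition \ref{prop:ex_FP}, the whole expression is $\le 0$. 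Hence the right-hand side satisfies $\sum_n(f_h[M^{n+1}]-f_h[\tilde M^{n+1}],M^{n+1}-\tilde M^{n+1})_2\le 0$.

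Finally I would invoke monotonicity. Each summand is nonnegative by (F2), which forces the inner product to be strictly positive unless its two arguments coincide; a nonnegative sum that is $\le 0$ must therefore be termwise zero, and (F2) then yields $M^{n+1}=\tilde M^{n+1}$ for $n=0,\dots,N-1$. Together with $M^0=\tilde M^0$ this gives $M=\tilde M$. Once the measures agree, the two Hamilton--Jacobi--Bellman equations have identical right-hand sides and terminal data, so $U=\tilde U$ follows from the backward-induction uniqueness furnished by Lemma \ref{le:existence} (equivalently, from the comparison principle of Proposition \ref{prop:ex_HJ} applied in both directions), completing the proof.
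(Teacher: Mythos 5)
Your proof is correct and follows essentially the same route as the paper's: the Lasry--Lions duality argument, cancelling the fractional terms via Lemma \ref{le:by_parts_n_n_plus_1} (with the boundary sums vanishing since $\bar U^N=0$, $\bar M^0=0$) and the Laplacian by self-adjointness, then combining \eqref{integralterm}, convexity (G4), the nonnegativity of $M^{n+1},\tilde M^{n+1}$ from Proposition \ref{prop:ex_FP}, and the termwise use of (F2) to get $M=\tilde M$, and finally Proposition \ref{prop:ex_HJ} to get $U=\tilde U$. The only cosmetic difference is that you keep the convexity brackets on the left-hand side with sign $\le 0$, whereas the paper moves all terms to one side as in \eqref{eq:un_nonneg_terms} so the brackets appear with sign $\ge 0$; the content is identical.
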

\begin{proof}
We suppose by contradiction that there are two solutions $(U,M)$ and $(\tilde{U},\tilde{M})$ such that $U^N = \tilde{U}^N$ and $M^{0} = \tilde{M}^{0}$. Then we have
\[
\begin{aligned}
\bar D_{\Delta t}^{\alpha} (U^{n}& -\tilde{U}^{n})_{i,j} - (\Delta_h (U^{n}-\tilde{U}^{n}))_{i,j} \\
& + g(x_{i,j},[D_hU^{n}]_{i,j}) - g(x_{i,j},[D_hU^{n}]_{i,j}) -  (f_h[M^{n+1}] - f_h[\tilde{M}^{n+1}] )_{i,j}=0.
\end{aligned}
\]
Multiplying by $Z_{i,j}$ and summing over all $i,j$ leads to
\begin{equation}\label{eq:u_diff}
\begin{aligned}
(\bar D_{\Delta t}^{\alpha} & (U^{n} -\tilde{U}^{n}),Z)_2 - ((\Delta_h (U^{n}-\tilde{U}^{n})),Z)_2 \\
 & + \sum_{i,j} (g(x_{i,j},[D_hU^{n}]_{i,j}) - g(x_{i,j},[D_h\tilde{U}^{n}]_{i,j}))Z_{i,j} - (( f_h[M^{n+1}] - f_h[\tilde{M}^{n+1}]),Z )_2=0.
\end{aligned}
\end{equation}
Multiplying the discrete equation \eqref{discrete_FPequation} by $W_{i,j}$ and summing over all $i,j$ we obtain
\[
(D_{\Delta t}^\alpha M^{n+1},W)_2 - (M^{n+1}, \Delta_h W)_2 + \sum_{i,j} M_{i,j}^{n+1} [D_hW]_{i,j} \cdot \nabla_q g  (x_{i,j},[D_hU^{n}]) = 0.
\]
From this and the similar equation satisfied by $\tilde{M}^{n+1}$, we obtain
\begin{equation}\label{eq:m_diff}
\begin{aligned}
0 & = (D_{\Delta t}^\alpha (M^{n+1}  -\tilde{M}^{n+1}),W)_2 - ((M^{n+1}-\tilde{M}^{n+1}), \Delta_h W)_2 \\
& + \sum_{i,j} M_{i,j}^{n+1} [D_hW]_{i,j} \cdot \nabla_q g ( x_{i,j},[D_hU^{n}])  - \sum_{i,j} \tilde{M}_{i,j}^{n+1} [D_hW]_{i,j} \cdot \nabla_q g  (x_{i,j},[D_h\tilde{U}^{n}] ).
\end{aligned}
\end{equation}

Let $Z = M^{n+1} - \tilde{M}^{n+1}$ in \eqref{eq:u_diff}   and $W = U^{n} - \tilde{U}^{n}$ in \eqref{eq:m_diff}.
Subtracting the two equations so obtained,   summing over $n=0,1,\dots,N-1$ and recalling that, by
 Lemma \ref{le:by_parts_n_n_plus_1}, we have
\[
\sum_{n=0}^{N-1} (\bar D_{\Delta t}^{\alpha} (U^{n} -\tilde{U}^{n}),M^{n+1} - \tilde{M}^{n+1})_2 =  \sum_{n=0}^{N-1} (D_{\Delta t}^\alpha (M^{n+1}  -\tilde{M}^{n+1}),U^{n} - \tilde{U}^{n})_2,
\]
we finally get
\begin{equation}\label{eq:un_nonneg_terms}
\begin{aligned}
0 & = \sum_{n=0}^{N-1}   \bigg(f_h[M^{n+1}] - f_h[\tilde{M}^{n+1}],M^{n+1} - \tilde{M}^{n+1}\bigg)_2 \\
& +   \sum_{n=0}^{N-1}  \sum_{i,j} M_{i,j}^{n+1}\bigg( g(x_{i,j},[D_h\tilde{U}^{n}]_{i,j}) - g(x_{i,j},[D_hU^{n}]_{i,j})  \\
& -[D_h(\tilde{U}^{n}-U^{n})]_{i,j} \cdot \nabla_q g ( x_{i,j},[D_hU^{n}])   \bigg)\\
& + \sum_{n=0}^{N-1}  \sum_{i,j} \tilde{M}_{i,j}^{n+1} \bigg(g(x_{i,j},[D_hU^{n}]_{i,j}) - g(x_{i,j},[D_h\tilde{U}^{n}]_{i,j}) \\
& -[D_h(U^{n}-\tilde{U}^{n})]_{i,j} \cdot \nabla_q g ( x_{i,j},[D_h\tilde{U}^{n}] )  \bigg).
\end{aligned}
\end{equation}
By the   monotonicity of $f_h[\cdot]$ and assumption (G4), all three terms on RHS of \eqref{eq:un_nonneg_terms} are non negative. Hence, we obtain that $M^{n} = \tilde M^{n}$ for all $0 \le n \le N$. By Prop. \ref{prop:ex_HJ}, it also follows   that $U^{n} = \tilde U^{n}$ for all $0 \le n \le N$.
\end{proof}

\subsection{Existence}
To show the existence of a solution for the scheme \eqref{eq:scheme}, we rely on a fixed  point argument. 
\begin{theorem}
For $\Dt$ sufficiently small,  \eqref{eq:scheme} has a solution.
\end{theorem}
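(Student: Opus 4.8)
The plan is to recast \eqref{eq:scheme} as a fixed-point problem on the compact convex set $(\cK_h)^N$ of tuples of discrete probability measures and to apply Brouwer's theorem. I fix the prescribed data $U^N=u_T$ and $M^0$ once and for all, and I define a map $\Phi$ on $(\cK_h)^N$ (thought of as the unknowns $(M^1,\dots,M^N)$) as follows. Given $M=(M^1,\dots,M^N)\in(\cK_h)^N$, first solve the backward scheme \eqref{discrete_HJequation}: by Proposition \ref{prop:ex_HJ} there is a grid function $U=U[M]=(U^0,\dots,U^{N-1})$ satisfying it, and the comparison principle in the same proposition makes it unique, so $U[M]$ is single-valued. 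Then, using this $U[M]$, solve the forward scheme \eqref{discrete_FPequation}: by Proposition \ref{prop:ex_FP}, for $\Dt$ small the linear systems \eqref{eq:FP_1} are governed by $M$-matrices, hence admit a unique solution $\tilde M=(\tilde M^1,\dots,\tilde M^N)$ with $\tilde M^n\in\cK_h$ for every $n$. Setting $\Phi(M)=\tilde M$ defines a self-map of $(\cK_h)^N$.

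Next I would verify the hypotheses of Brouwer's theorem. The set $(\cK_h)^N$ is a finite product of simplices, hence nonempty, compact and convex, and the construction above already shows $\Phi$ maps it into itself. The remaining point is continuity of $\Phi$, which I split into continuity of $M\mapsto U[M]$ and of $U\mapsto\tilde M[U]$. For the first, each $U^n$ is produced by the backward induction of Proposition \ref{prop:ex_HJ}, applying Lemma \ref{le:existence} with right-hand side $\ra^{-1}\sum_{k=n+1}^{N}\bc_n^k U^k+f_h[M^{n+1}]$; the solution operator $\Psi$ of the stationary equation is nonexpansive in $\|\cdot\|_\infty$ by \eqref{eq:lip0} and $f_h$ is continuous by (F1), so $U[M]$ depends continuously on $M$. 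For the second, the matrix $I+\ra A$ in \eqref{eq:FP_1} has entries built from $g_q(x,[D_hU^n])$, which are continuous in $U$ because $g\in C^1$ by (G3); since $I+\ra A$ is invertible, Cramer's rule shows $\tilde M[U]$ is continuous in $U$. Composing, $\Phi$ is continuous on $(\cK_h)^N$.

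The technical heart, and the reason for requiring $\Dt$ small, is to make the self-map property hold \emph{uniformly} over the domain. The $M$-matrix argument of Proposition \ref{prop:ex_FP} controls $\ra=\Gamma(2-\alpha)\Dt^{\alpha}$ against the size of the drift coefficients $g_q(x,[D_hU^n])$, so that a priori the admissible threshold could depend on $\|D_hU^n\|_\infty$. Here Proposition \ref{prop:lip_U} is the key ingredient: it furnishes a Lipschitz bound $\|D_hU[M]^n\|_\infty\le L$ with $L$ independent of $M\in(\cK_h)^N$ (the only dependence on $M$ enters through $f_h[M^{n+1}]$, whose sup and Lipschitz bounds are uniform over $\cK_h$ by (F3)). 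Consequently a single threshold $\Dt_0$ can be chosen so that, for all $\Dt<\Dt_0$ and \emph{all} $M\in(\cK_h)^N$, the Fokker--Planck solver is well posed and $\cK_h$-valued, which is exactly what turns $\Phi$ into a genuine continuous self-map.

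With continuity and the self-map property established, Brouwer's fixed-point theorem yields $M^{*}\in(\cK_h)^N$ with $\Phi(M^{*})=M^{*}$; unwinding the definition, $(U[M^{*}],M^{*})$ satisfies both equations of \eqref{eq:scheme} together with the boundary data $U^N=u_T$ and the prescribed $M^0$, giving the desired solution. I expect the main obstacle to be this uniform-in-$M$ bookkeeping rather than the fixed-point step itself: one must ensure that the smallness of $\Dt$ needed for the $M$-matrix property does not tacitly depend on the particular iterate, and it is precisely the a priori gradient estimate of Proposition \ref{prop:lip_U} that removes this dependence and makes the Brouwer argument go through.
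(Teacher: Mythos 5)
Your proof is correct and follows essentially the same route as the paper: the same decomposition $\Phi=\Psi_2\circ\Psi_1$ on the compact convex set $\cK_h^N$, the same use of Proposition \ref{prop:lip_U} together with (F3) to make the $M$-matrix smallness threshold for $\Dt$ uniform in $M$, and the same conclusion via Brouwer's fixed-point theorem. The only cosmetic difference is that the paper proves boundedness and continuity of $\Psi_1$ by explicit barrier functions of the form $((N-n)\Dt)^\alpha$ using Lemma \ref{le:barrier_argument}, whereas you argue continuity via the nonexpansiveness estimate \eqref{eq:lip0} and backward induction, which is equally valid at fixed discretization.
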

\begin{proof}
First observe that, by the assumption on $m_0$, we have that $M^0\in\cK_h$. We denote    
\begin{equation}\label{eq:def_K_N}
 \cK_h^N=\{M=(M^n)_{0\le n\le N} :\, M^0= \frac 1 {h^2} \int_{|x-x_{i,j}|_{\infty}\le h/2} m_0(x) dx,\,  M^n\in\cK_h,\,\forall n=0,\dots,N\} 
\end{equation}
and we define a map $\Phi$ in the following way:  
Let $\Psi_1:\cK^N_h\to (\R^{N_h^{2}})^N$ be the map which associates to $M=(M^n)_{0\le n\le N}$  the solution 
$U=(U^n)_{0\le n\le N}$ of \eqref{discrete_HJequation} and let $\Psi_2: (\R^{N_h^2})^N\to (\R^{N_h^2})^N$ the map that    to $U=(U^n)_{0\le n\le N}$ associates the solution of \eqref{discrete_FPequation}. Then we show that the map  $\Phi=\Psi_2\circ \Psi_1$, is well defined on  $\cK^N_h$ and maps this set into itself.\\

\indent We show that the map $\Psi_1$ is bounded by means of a barrier argument. By \eqref{eq:barrier1}, the function
$$V^n=\|U^N\|_\infty + \frac{M_0\Gamma (2-\alpha)}{\alpha(1-\alpha)}((N-n)\Dt)^\alpha,$$
where $M_0=\sup_{i,j}|H(x_{i,j},0)-(f_h(M^{n+1}))_{i,j}|$, satisfies $V^N\ge U^N$ and
$$\bar D_{\Delta t}^{\alpha} V_{i,j}^{n} - (\Delta_h V^{n})_{i,j} + g(x_{i,j},[D_hV^{n}]_{i,j}) \ge  f_h[M^{n+1}]_{i,j},\quad n=0,\dots,N-1.$$
By Proposition \ref{prop:ex_HJ}, we get $U^n\le V^n$, $n=0,\dots,N$. In a similar way, we prove that
$U^n\ge- V^n$, $n=0,\dots,N$, and therefore
$$\|U^n\|_\infty \le \|U^N\|_\infty +CT^\alpha.$$
Hence $\Psi_1$ maps $\cK_h^N$ in a bounded subset of $(\R^{N_h^2})^N$.\\
\indent To prove that the map $\Psi_1$ is continuous, consider a sequence $M_{\kappa}=(M_{\kappa}^n)_{0\le n\le N}\in \cK_h^N$ which tends to $M=(M^n)_{0\le n\le N}\in \cK_h^N$ for $\kappa \to \infty$ and denote by $U_{\kappa}$, $U \in (\R^{N_h^2})^N$ the  corresponding solutions of \eqref{discrete_HJequation}. Set 
\[
E_{\kappa} = \sup_{ n=0,\dots,N-1}  \left\| f_h[M_{\kappa}^{n+1}]  -f_h[M^{n+1}] \right\|_\infty
\]
and observe that, by  assumption (F1) on $f_h$, $E_{\kappa}\to 0$ for $k\to \infty$. Set $V^n_{\kappa}=U^n_{\kappa}- \frac{\Gamma(2-\alpha)}{\alpha(1-\alpha)}((N-n)\Dt)^\alpha E_{\kappa}$, $n=0,\dots,N$, then 
\begin{align*}
\bar D_{\Delta t}^{\alpha} V^n_{i,j}  - (\Delta_h V^{n})_{i,j} + g(x_{i,j},[D_h V^{n}]_{i,j})\le  f_h[M_k^{n+1}]_{i,j}-E_{\kappa}= \\
f_h[M^{n+1}]_{i,j}+f_h[M_k^{n+1}]_{i,j}-f_h[M^{n+1}]_{i,j}-  E_{\kappa}
\le f_h[M^{n+1}]_{i,j}.
\end{align*}
\indent Moreover $V^N\le U^N$ and therefore, by Prop. \ref{prop:ex_HJ}, $V^n_{\kappa}\le U^n$ for any $n=0,\dots,N$. Using a similar argument for $W^n_{\kappa}=U^n_{\kappa}+\frac{\Gamma(2-\alpha)}{\alpha(1-\alpha)}((N-n)\Dt)^\alpha E_{\kappa}$, $n=0,\dots,N$, we   get that for any $\kappa$
\[\sup_{n=0,\dots,N}\|U^n_{\kappa} -U^n\|_\infty\le \frac{\Gamma(2-\alpha)}{\alpha(1-\alpha)}T^\alpha	 E_{\kappa}\]
and therefore the continuity of the map $\Psi_1$.\\
\indent Given $U=\Psi_1(M)$, we first show that, for $\Dt$ sufficiently small, $\tilde M=\Psi_2(U)$ is well defined. First observe that, by Prop. \ref{prop:lip_U}, $\|D_h U\|_\infty$ is bounded uniformly in $M$. Therefore, since $g$ is $C^1$, there exists a constant $C$, independent of $M$, such that
for $k=1,2,3,4,$
\begin{equation}\label{eq:Fix1}
\left|\frac{\partial g}{\partial q_k} (x_{i,j},[D_hU^n]_{i,j})\right|\le C, \quad 0\le i,j\le N_h,\,0\le n\le N.
\end{equation}
\indent Hence, arguing as in Prop. \ref{prop:ex_FP}, we obtain that, for $\Dt$ sufficiently small, there exists a solution $\tilde M$ of \eqref{discrete_FPequation}. Moreover $\tilde M\in \cK_h^{N}$. The continuity of the map $\Psi_2$ follows by the  regularity of $g$ and the linearity of the problem.\\
\indent The boundedness and  continuity of the map $\Phi_1$ and the continuity of the map $\Psi_2$ implies that   $\Phi=\Psi_2\circ\Psi_1$ is a continuous map on the compact, convex set $\cK_h^N$. Moreover by Prop. \ref{prop:ex_FP}, $\Phi$ maps $\cK_h^N$ into itself. Hence, by Brouwer's Theorem, $\Phi$ admits a fixed point $M$, which clearly a solution of system \eqref{eq:scheme}.
\end{proof}


\section{Convergence}\label{sec:convergence}
In this section we prove a convergence result for the scheme \eqref{eq:scheme}. We relies on the analysis performed in \cite[Section 3]{acc} for the Mean Field Games system and in \cite[Theorem 3.2]{lx} for computation related to fractional derivatives. \\
\indent A preliminary result for the  analysis is a fundamental identity which relies on the duality structure of the discrete system.
\begin{proposition}
Let  $(U,M)$ be   a solution of \eqref{eq:scheme} and  $(\tdU,\tdM)$ a solution of the perturbed system
\begin{equation}\label{eq:conv_scheme_pert}
\left\{
\begin{array}{ll}
\bar D_{\Delta t}^{\alpha} \tdU_{i,j}^{n} - (\Delta_h \tdU^{n})_{i,j} + g(x_{i,j},[D_h\tdU^{n}]_{i,j}) = f_h[\tdM^{n+1}]_{i,j}+a^n_{i,j}, \\[6pt]
D_{\Delta t}^\alpha \tdM_{i,j}^{n+1} - (\Delta_h \tdM^{n+1})_{i,j} - \bB_{i,j}(\tdU^{n},\tdM^{n+1}) = b^{n+1}_{i,j}, \\[6pt]
\tdU^{N}_{i,j} = u_T(x_{i,j}), \,   \tdM_{i,j}^{0}=  \frac 1 {h^2} \int_{|x-x_{i,j}|_{\infty}\le h/2} m_0(x) dx,
\end{array}
\right.
\end{equation}
for $n=0,\dots,N-1$ and $0\le i,j\le N_h$.	 Then
\begin{equation}\label{eq:conv_fund_id}
\begin{aligned}
&  \sum_{n=0}^{N-1}   \bigg(f_h[M^{n+1}] - f_h[\tilde{M}^{n+1}],M^{n+1} - \tilde{M}^{n+1}\bigg)_2 \\
& +   \sum_{n=0}^{N-1}  \sum_{i,j} M_{i,j}^{n+1}R^n_{i,j}(U,\tdU)+ \sum_{n=0}^{N-1}  \sum_{i,j} \tilde{M}_{i,j}^{n+1} R^n_{i,j}(\tdU,U)\\
&=\sum_{n=0}^{N-1}(a^n,M^{n+1}-\tdM^{n+1})_2+\sum_{n=0}^{N-1}(b^{n+1},U^n-\tdU^n)_2
\end{aligned}
\end{equation}
where
\[
R^n_{i,j}(V,W)=g(x_{i,j},[D_hW^{n}]_{i,j}) - g(x_{i,j},[D_hV^{n}]_{i,j})   -[D_h(W^{n}-V^{n})]_{i,j} \cdot  g_q  (x_{i,j},[D_hV^{n}])
\]
for $V=(V^n)_{0\le n\le N}$, $W=(W^n)_{0\le n\le N}$.
\end{proposition}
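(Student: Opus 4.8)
The plan is to reproduce, essentially line by line, the computation that established Theorem~\ref{thm:uniqueness}, the only new feature being that the source terms $a^n$ and $b^{n+1}$ must be carried through and will accumulate on the right-hand side of \eqref{eq:conv_fund_id}.

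First I would subtract the backward equation of \eqref{eq:conv_scheme_pert} from that of \eqref{eq:scheme}; the difference $U^n-\tdU^n$ then solves a discrete Hamilton--Jacobi--Bellman equation whose right-hand side is $(f_h[M^{n+1}]-f_h[\tdM^{n+1}])_{i,j}-a^n_{i,j}$. Pairing this with $Z=M^{n+1}-\tdM^{n+1}$ in the product $(\cdot,\cdot)_2$ gives the exact analogue of \eqref{eq:u_diff}, now carrying one extra term $-(a^n,Z)_2$. Symmetrically, I would subtract the two Fokker--Planck equations, pair the difference with $W=U^n-\tdU^n$, and use the discrete integration-by-parts \eqref{integralterm} to convert the $\bB$-terms into the $g_q$-contributions; this reproduces \eqref{eq:m_diff} up to the extra term $-(b^{n+1},W)_2$.

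The core step is to subtract these two scalar identities and sum over $n=0,\dots,N-1$. Here two cancellations occur. The time-fractional terms assemble into the combination appearing in Lemma~\ref{le:by_parts_n_n_plus_1}, applied to the differences $U-\tdU$ and $M-\tdM$; its boundary terms carry the factors $U^N-\tdU^N$ and $M^0-\tdM^0$, which vanish since \eqref{eq:scheme} and \eqref{eq:conv_scheme_pert} share the same terminal and initial data, so these terms drop out completely. The discrete Laplacian terms cancel against each other by the self-adjointness of $\Delta_h$, namely $(\Delta_h A,B)_2=(A,\Delta_h B)_2$. What remains on the left is the coupling term $(f_h[M^{n+1}]-f_h[\tdM^{n+1}],M^{n+1}-\tdM^{n+1})_2$ together with the Hamiltonian terms --- those involving $g$ coming from the backward equation and those involving $g_q$ from the forward equation --- while the sources survive as $(a^n,M^{n+1}-\tdM^{n+1})_2$ and $(b^{n+1},U^n-\tdU^n)_2$ on the right.

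Finally I would recombine the Hamiltonian terms: collecting the coefficient of $M^{n+1}_{i,j}$ produces $g(x_{i,j},[D_h\tdU^n]_{i,j})-g(x_{i,j},[D_hU^n]_{i,j})-[D_h(\tdU^n-U^n)]_{i,j}\cdot g_q(x_{i,j},[D_hU^n])=R^n_{i,j}(U,\tdU)$, and the coefficient of $\tdM^{n+1}_{i,j}$ is $R^n_{i,j}(\tdU,U)$, which is exactly the regrouping already performed to obtain \eqref{eq:un_nonneg_terms}. I expect the one genuinely delicate point to be precisely this last reorganization together with the sign bookkeeping of the sources through the subtraction: one must match every $g$ and $g_q$ term and each sign so that the Hamiltonian contributions assemble into $R^n_{i,j}(U,\tdU)$ and $R^n_{i,j}(\tdU,U)$ and the source terms land with the correct signs on the right, yielding \eqref{eq:conv_fund_id}. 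Since this algebra is identical to that of Theorem~\ref{thm:uniqueness}, no essentially new obstacle arises beyond careful tracking of $a^n$ and $b^{n+1}$.
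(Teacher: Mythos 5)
Your proposal is correct and follows exactly the route of the paper's own (one-line) proof: the paper simply says to repeat the computation of Theorem \ref{thm:uniqueness} while carrying the sources $a^n$, $b^{n+1}$, which is precisely what you do --- pairing the subtracted HJB and Fokker--Planck equations with $M^{n+1}-\tdM^{n+1}$ and $U^n-\tdU^n$, cancelling the fractional terms via Lemma \ref{le:by_parts_n_n_plus_1} (whose boundary terms vanish because the terminal and initial data coincide), cancelling the Laplacians by self-adjointness of $\Delta_h$, and regrouping the $g$ and $g_q$ contributions into $R^n_{i,j}(U,\tdU)$ and $R^n_{i,j}(\tdU,U)$. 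One caveat on the sign bookkeeping you rightly single out as the delicate point: with the conventions of \eqref{eq:conv_scheme_pert}, carrying out the subtraction carefully yields the $b$-source as $-\sum_{n=0}^{N-1}(b^{n+1},U^n-\tdU^n)_2$, i.e.\ with sign opposite to the $a$-term, rather than the two plus signs printed in \eqref{eq:conv_fund_id}; a quick algebraic check with $g\equiv 0$ and $f_h\equiv 0$ (where duality forces $\sum_n(a^n,M^{n+1}-\tdM^{n+1})_2=\sum_n(b^{n+1},U^n-\tdU^n)_2$) confirms the relative sign. This discrepancy sits in the paper's printed identity rather than in your method, and it is harmless for the subsequent convergence analysis, which only uses absolute bounds on both source terms --- but your assertion that the sources ``land with the correct signs'' as displayed in \eqref{eq:conv_fund_id} should be amended accordingly.
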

\begin{proof}
The identity \eqref{eq:conv_fund_id} can be obtained by arguing as in the proof of Theorem \ref{thm:uniqueness}, see in particular \eqref{eq:un_nonneg_terms}, and taking into account the additional terms $a^n_{i,j}$, $b^n_{i,j}$.
\end{proof}

For the convergence analysis, we consider the reference case
\begin{equation}\label{eq:conv_ham}
H(x,p)=|p|^\beta+c(x)
\end{equation}
where $\beta\ge 2$ and $c:\T^2\to\R$ is a continuous function, but the analysis can be extended in a more general setting. For the approximation of the Hamiltonian, we  consider the  numerical Hamiltonian  
\begin{equation}\label{eq:conv_num_ham}
g(x,q)=((q_1^-)^2+(q_2^+)^2+(q_3^-)^2+(q_4^+)^2)^{\beta/2}+c(x)
\end{equation}
where    $q^\pm$ denote the positive and the negative part of $q\in\R$. 
\begin{theorem}
	Assume   that the continuous problem \eqref{eq:frac_mfg} admits a unique classical solution $(u,m)\in C^{2}(\T^2\times [0,T])\times C^{2}(\T^2\times [0,T])$ and that $m_0\ge \delta>0$. For $\s=(h,\Dt)$, denote by $(u_\s,m_\s)$ the linear function obtained by interpolating the solution $(U,M)$ of \eqref{eq:scheme} on the nodes of the space-time grids. Then, as  $|\s|\to 0$, $(u_\s,m_\s)$ converges to $(u,m)$ in $L^\beta(0,T: W^{1,\beta}(\T^2))\times  (C(0,T;L^2(\T^2)) \cap L^2(0,T;H^1(\T^2))$. 
	\end{theorem}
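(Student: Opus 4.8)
The plan is to run the duality (energy) argument of \cite{acc} in the time-fractional setting, using the nodal restriction of the exact solution as a solution of the perturbed scheme \eqref{eq:conv_scheme_pert}. Set $\tdU^n_{i,j}=u(x_{i,j},t_n)$ and $\tdM^n_{i,j}=m(x_{i,j},t_n)$. Since $(u,m)$ is a classical solution of \eqref{eq:frac_mfg}, the pair $(\tdU,\tdM)$ satisfies \eqref{eq:conv_scheme_pert}, where $a^n_{i,j},b^{n+1}_{i,j}$ are the discrepancies between the discrete and the continuous operators at the grid nodes. First I would show $\max_n(\|a^n\|_\infty+\|b^{n+1}\|_\infty)\to0$ as $|\s|\to0$: the L1 discretization of the Caputo derivatives is consistent of order $\bigO(\Dt^{2-\alpha})$ on $C^2$ functions (as in \cite[Theorem 3.2]{lx}), the five-point Laplacian contributes $\bigO(h^2)$, the monotone Hamiltonian $g$ is first-order consistent with $H$ by (G2) and the smoothness of $u$, the coupling error $\|f_h[\tdM^{n+1}]-f[m](\cdot,t_{n+1})\|_\infty\to0$ by (F4), and the discrete drift $\bB$ is likewise consistent with $\diver(mH_p)$ for smooth data.

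Next I would insert $(U,M)$ and $(\tdU,\tdM)$ into the fundamental identity \eqref{eq:conv_fund_id}. Multiplying it by $\Dt$ and using $N\Dt=T$, the right-hand side $\Dt\sum_n(a^n,M^{n+1}-\tdM^{n+1})_2+\Dt\sum_n(b^{n+1},U^n-\tdU^n)_2$ is bounded by $CT\max_n(\|a^n\|_\infty+\|b^{n+1}\|_\infty)$, using that $M^{n+1},\tdM^{n+1}\in\cK_h$ have unit mass and that $U^n,\tdU^n$ are uniformly bounded (barrier argument of the existence proof together with Proposition \ref{prop:lip_U}); hence it tends to $0$. By monotonicity of $f_h$ and convexity (G4) of $g$ — the remainders $R^n_{i,j}$ being Bregman divergences of $g$ — the three terms on the left of \eqref{eq:conv_fund_id} are nonnegative, so each of them, weighted by $\Dt$, vanishes as $|\s|\to0$.

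The convergence of $u_\s$ then follows from the coercivity of the convexity remainder. Using $m_0\ge\delta>0$ and a minimum principle for the time-fractional Fokker--Planck equation, $m\ge\delta'>0$, so $\tdM^{n+1}\ge\delta'$; and since for $\beta\ge2$ the numerical Hamiltonian \eqref{eq:conv_num_ham} is uniformly convex on the bounded gradient set $\{\|D_hU^n\|_\infty,\|D_h\tdU^n\|_\infty\le L\}$ of Proposition \ref{prop:lip_U}, one has $R^n_{i,j}(\tdU,U)\ge c\,|[D_h(U^n-\tdU^n)]_{i,j}|^\beta$. Therefore the vanishing of $\Dt\sum_n h^2\sum_{i,j}\tdM^{n+1}_{i,j}R^n_{i,j}(\tdU,U)$ forces $\Dt\sum_n h^2\sum_{i,j}|[D_h(U^n-\tdU^n)]_{i,j}|^\beta\to0$, i.e. $\|D_h(u_\s-u)\|_{L^\beta(0,T;L^\beta(\T^2))}\to0$; combined with control of the zeroth-order part (via the $\|U^n-\tdU^n\|_\infty$ bound from the barrier argument) this yields convergence in $L^\beta(0,T;W^{1,\beta}(\T^2))$.

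For $m_\s$ I would run a separate discrete energy estimate, which I expect to be the main obstacle. Subtracting the Fokker--Planck equations for $M$ and $\tdM$, testing with $W^{n+1}=M^{n+1}-\tdM^{n+1}$ and summing, the fractional term is handled by the energy-stability inequality of the L1 scheme, $(\Da W^{n+1},W^{n+1})_2\ge\tfrac12\Da\|W^{n+1}\|_2^2$ (see \cite{tt}), the Laplacian yields the seminorm $\|D_hW^{n+1}\|_2^2$, and the drift difference $\bB(U^n,M^{n+1})-\bB(\tdU^n,\tdM^{n+1})$ splits into a part linear in $W^{n+1}$, absorbed into the diffusion by Young's inequality thanks to \eqref{eq:Fix1}, and a part proportional to $\tdM^{n+1}$ times $g_q(x,[D_hU^n])-g_q(x,[D_h\tdU^n])$, controlled by the strong gradient convergence of $U$ just obtained and the boundedness of $\tdM$. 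Adding the contribution of $b^{n+1}$ and applying the fractional discrete Gronwall inequality \eqref{eq:discrete_Gronwall} gives $\sup_n\|M^n-\tdM^n\|_2\to0$ and $\sum_n\Dt\,\|D_h(M^n-\tdM^n)\|_2^2\to0$, i.e. convergence of $m_\s$ in $C(0,T;L^2(\T^2))\cap L^2(0,T;H^1(\T^2))$. The delicate points are the correct $\Dt$-bookkeeping in the fractional identities and the quadratic-form positivity of the L1 operator, which is precisely what makes the fractional Gronwall estimate close.
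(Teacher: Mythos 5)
Your skeleton is the paper's: restrict the classical solution to the grid to get a solution of the perturbed scheme \eqref{eq:conv_scheme_pert} with vanishing consistency errors, use the fundamental identity \eqref{eq:conv_fund_id} to kill the three nonnegative terms and extract the gradient convergence \eqref{eq:conv_seminorm} (with $\tdM>0$ from $m_0\ge\delta$), then an energy estimate for the Fokker--Planck part. The genuine gap is in your treatment of the zeroth-order convergence of $u_\s$. The barrier argument gives only the uniform bound $\|U^n\|_\infty\le\|U^N\|_\infty+CT^\alpha$; it does not give $\|U^n-\tdU^n\|_\infty\to0$, and at the stage where you invoke it this cannot be repaired, because $U$ solves the HJB scheme with coupling $f_h[M^{n+1}]$ while $\tdU$ solves it with $f_h[\tdM^{n+1}]+a^n$, and the difference $f_h[M^{n+1}]-f_h[\tdM^{n+1}]$ is not yet known to be small. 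The seminorm \eqref{eq:conv_seminorm} controls only $D_h(U-\tdU)$, and nothing pins down the zeroth-order part (the means are not controlled), so convergence in $L^\beta(0,T;W^{1,\beta}(\T^2))$ does not follow. The paper resolves this by ordering the steps differently: it first proves the convergence of $M$ (its Step 2), so that by (F4) and (F1) the function $\tdU$ solves the perturbed HJB \eqref{eq:conv_HJ_pert} \emph{with $M$ in place of $\tdM$} and a vanishing perturbation $a^n$; then the nonexpansiveness of the resolvent map $\Psi$ of Proposition \ref{prop:lip_U} yields $\bar D_{\Delta t}^{\alpha}\big(\|\tdU^n-U^n\|_\infty\big)\le\|a^n\|_\infty$, and the backward fractional Gronwall inequality \eqref{eq:discrete_Gronwall} gives uniform convergence of $u_\s$. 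You need this step (or an equivalent mechanism); as written, your proof of the $u$-component fails.

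Your Fokker--Planck step, by contrast, is a legitimate variant of the paper's. You propose the L1 quadratic positivity $(\Da W,W)_2\ge\tfrac12\Da\|W\|_2^2$ from \cite{tt} plus the fractional Gronwall lemma of \cite{lwz}; the paper instead multiplies the resolvent form of the L1 scheme by $E^{n+1}$, obtains \eqref{eq:conv_es2}, and runs the Lin--Xu stability induction on the weights $B_k=(k+1)^{1-\alpha}-k^{1-\alpha}$ from \cite{lx}, which yields $\sup_n\|E^n\|_2\le C\max_k\varepsilon^k$ with no Gronwall at all and then recovers the $L^2(0,T;H^1)$ gradient convergence from the $\bigO(\Dt^{2-\alpha})$ consistency of the L1 scheme. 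Both routes face the same bookkeeping delicacy you flag (the estimate is driven by $\max_n$ of the per-step forcing, into which the gradient error of $U$ enters), so your route is acceptable. Two smaller repairs: define $\tdM^n_{i,j}$ as the cell average $\frac1{h^2}\int_{|x-x_{i,j}|_\infty\le h/2}m(x,t_n)\,dx$, as the paper does, rather than nodal values, so that $\tdM\in\cK_h^N$ and $\tdM^0=M^0$ exactly --- otherwise the boundary terms in the discrete integration by parts (Lemma \ref{le:by_parts_n_n_plus_1}) do not cancel and \eqref{eq:conv_fund_id} picks up extra terms; and the coercivity $R^n_{i,j}(\tdU,U)\ge c\,|[D_h(U^n-\tdU^n)]_{i,j}|^\beta$ is not a consequence of uniform convexity (the Hessian of \eqref{eq:conv_num_ham} degenerates for $\beta>2$) but of the standard power-function convexity inequality used in \cite{acc}; the conclusion is the same.
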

\begin{proof}
The idea of the proof  is to show that the solution of the continuous problem \eqref{eq:frac_mfg} satisfies the perturbed discrete  problem \eqref{eq:conv_scheme_pert}  with $(a^n,b^n)$ vanishing for the discretion steps $\Dt,h$ going to $0$.\\
\indent Given a solution   $(u,m)$ of \eqref{eq:frac_mfg}, we define two  grid functions $\tdU_\s$,$\tdM_\s$ on $(\R^{N_h^2})^N$ by
\[(\tdU_\s)^n_{i,j}=u(x_{i,j},n\Dt),\quad (\tdM_\s)^n_{i,j}=\frac 1 {h^2} \int_{|x-x_{i,j}|\le h/2} m(x,n\Dt) dx .\]
\indent Note that $\tdM_\s\in \cK_h^N$, see \eqref{eq:def_K_N}. Moreover, since $(u,m)$ is a smooth function, 
$(\tdU_\s,\tdM_\s)$ is a solution of \eqref{eq:conv_scheme_pert} with the perturbation $(a^n_\s,b^n_\s)$ given by the consistency error of the scheme. Hence
\begin{equation}\label{eq:conv_consistency}
\max_{0\le n\le N}\{\|a^n_\s\|_\infty+\|b^n_\s\|_\infty\}\to 0 \quad \text{for $\s\to 0$}.
\end{equation}
\indent The first step of the proof is  the convergence result
\begin{equation}\label{eq:conv_seminorm}
\lim_{|\s|\to 0}h^2\Dt\sum_{n=0}^N\sum_{i,j}|[D_hU^n_\s]_{i,j}-[D_h(\tdU_\s)^n]_{i,j}|^\beta=0.
\end{equation} 
The identity \eqref{eq:conv_seminorm}  is obtained by means of the fundamental identity \eqref{eq:conv_fund_id} which does not involve the discrete fractional derivatives. Hence, for its proof, it is possible to repeat  the same proof of Step 1 in \cite[Section 3]{acc}. Note that here it is employed that $\tdM_\s>0$, which follows by the assumption $m_0\ge\delta>0$. \\
\indent The second step of the proof is the convergence of the solution of the Fokker-Planck equation. We denote with $M$, $\tdM$ the   grid functions corresponding to $m_\s$ and $m$, omitting the index $\s$ for simplicity. We set
$E=M-\tdM$ and 
\begin{align*}
&\|[D_hE^{n+1}]\|^2_2=h^2\sum_{i,j} [D_hE^{n+1}]_{i,j}\cdot[D_hE^{n+1}]_{i,j},\\
&\|(E^{n+1},[D_hE^{n+1}])\|_2^2=\|E^{n+1}\|^2_2+\rho_\alpha\|[D_hE^{n+1}]\|^2_2.
\end{align*} 
By subtracting the equations satisfied  by $M$, $\tdM$, we get
\[
E^{n+1}_{i,j}-\rho_\alpha (\Delta_h E^{n+1})_{i,j} - \rho_\alpha(\bB_{i,j}(U^{n},M^{n+1})-\bB_{i,j}(\tdU^{n},\tdM^{n+1})) =\sum_{k=0}^n c^n_k E^k+\ra b^{n+1}.
\]
Multiplying the previous equation by $E^{n+1}_{i,j}$, summing over $i$ and $j$ and using the identities \eqref{integralterm} and  \eqref{eq:bypart_space}, we get
\begin{equation}\label{eq:conv_es1}
\begin{split}
&\|(E^{n+1},[D_hE^{n+1}])\|_2^2-\ra h^2 \sum_{i,j} E^{n+1}_{i,j} [D_hE^{n+1}]_{i,j}\cdot g_q(x_{i,j}, [D U^n]_{i,j})\\
&=h^2\sum_{k=0}^n c^n_k (E^k,E^{n+1})+\ra h^2(b^{n+1}, E^{n+1})\\
&+\ra h^2  \sum_{i,j}\tdM_{i,j}^{n+1}[DE^{n+1}]_{i,j}\cdot(g_q(x_{i,j}, [D_hU^n]_{i,j})-g_q(x_{i,j}, [D_h\tdU^n]_{i,j})).
\end{split}
\end{equation}
Taking into account \eqref{eq:lip_est}, we estimate for $\Dt$ sufficiently small
\[
\left|\ra h^2 \sum_{i,j} E^{n+1}_{i,j} [D_hE^{n+1}]_{i,j}\cdot g_q(x_{i,j}, [D U^n]_{i,j})\right|\le 
\frac 1 2\|(E^{n+1},[D_hE^{n+1}])\|_2^2.
\]
Using again  \eqref{eq:lip_est} and since $(u,m)$ is a classical solution to \eqref{eq:frac_mfg},   we have for all $\eta>0$ (see\cite[Equation (4.5)]{acc}) 
\begin{align*}
&\left|\tdM_{i,j}^{n+1}[DE^{n+1}]_{i,j}\cdot(g_q(x_{i,j}, [D_hU^n]_{i,j})-g_q(x_{i,j}, [D_h\tdU^n]_{i,j}))\right|\\
&\le  C \Big(\frac{1}{\eta} |[D_hE^{n+1}]_{i,j}|^2+\eta |[D_hU^n]_{i,j}-[D_h\tdU^n]_{i,j}|^\beta\Big)
\end{align*}
Plugging the previous estimates in \eqref{eq:conv_es1} and dividing by $\|(E^{n+1},[D_hE^{n+1}])\|_2^2$, we get
\begin{equation}\label{eq:conv_es2}
\|(E^{n+1},[D_hE^{n+1}])\|_2\le C\left(\sum_{k=0}^n c^n_k \|E^k\|_2+\ra\varepsilon^{n+1}\right)
\end{equation}
where
\[
\varepsilon^{n+1}=\|b^{n+1}\|_\infty+  h^2  \sum_{i,j}|[D_hU^n]_{i,j}-[D_h\tdU^n]_{i,j}|^\beta.
\]
and the constant $C$ is independent of $h,\Dt$.\\ 
\indent Following \cite[Theorem 3.2]{lx}, we rewrite \eqref{eq:conv_es2} as
\begin{equation}\label{eq:conv_es3}
\|(E^{n+1},[D_hE^{n+1}])\|_2\le C\Big((1-B_1)\|E^n\|_2+\sum_{k=1}^{n-1}(B_k-B_{k+1})\|E^{n-k}\|_2
+B_n\|E^0\|_2+\ra\varepsilon^{n+1}\Big),
\end{equation}
where $B_k=(k+1)^{1-\alpha}-k^{1-\alpha}$, $k=0,\dots,n$. 
Recall that,  since $M^0=\tdM^0$, then $E^0\equiv 0$. We claim that for $n=1,\dots,N$,
\begin{equation}\label{eq:conv_es4}
\|(E^{n},[D_hE^{n}])\|_2\le C\ra B_{n-1}\max_{0\le k\le N}\varepsilon^k.
\end{equation}
For $n=1$, by \eqref{eq:conv_es3} we have 
\[\|(E^{1},[D_hE^{1}])\|_2\le C\ra\varepsilon^1\le C \ra B_0^{-1}\max_{0\le k\le N}\varepsilon^k, \]
since $B_0=1$. Assuming that \eqref{eq:conv_es4} is true for $n$, by \eqref{eq:conv_es3} and the fact that
$B_k^{-1}/B_{k+1}^{-1}<1$, we obtain
\begin{align*}
&\|(E^{n+1},[D_hE^{n+1}])\|_2\le \left[ (1-B_1)B_k^{-1}+\sum_{k=1}^{n-1}(B_k-B_{k+1})B^{-1}_{n-k-1}\right]C\ra\max_{0\le k\le N}\varepsilon^k\\
&+C\ra\max_{0\le k\le N}\varepsilon^k
\le \left[ (1-B_1) +\sum_{k=1}^{n-1}(B_k-B_{k+1})+B_n\right]C  \ra B_n^{-1}\max_{0\le k\le N}\varepsilon^k.
\end{align*}
Using $(1-B_1) +\sum_{k=1}^{n-1}(B_k-B_{k+1})+B_n=1$ (see \cite[Equation (3.7)]{lx}),  the claim 
\eqref{eq:conv_es4} follows.\\
Since $n^{-\alpha}B_{n-1}^{-1}\le 1/(1-\alpha)$ for $n=1,\dots,N$ (see   \cite[Equation (3.21)]{lx}) and    $\ra=\Gamma(2-\alpha)\Delta t^{\alpha}$, by \eqref{eq:conv_es4} we have
\begin{equation}\label{eq:conv_es5}
\|(E^{n},[D_hE^{n}])\|_2\le C\max_{0\le k\le N}\varepsilon^k.
\end{equation}
\indent Recalling that $E^n$ and $\varepsilon^n$ depends on $\s$ and that, by \eqref{eq:conv_consistency}-\eqref{eq:conv_seminorm},
$\lim_{|\s|\to 0}\max_{0\le k\le N}\varepsilon^k_\s=0$, we finally get
\[
\lim_{|\s|\to 0}\,\max_{0\le n\le N}\{\|E_\s^{n}\|_2+\rho_\alpha\|[D_hE_\s^{n}]\|_2\}=0,
\]
which gives the convergence of $m_\s$ to $m$ in $C(0,T;L^2(\T^2))$. To get the convergence in $L^2(0,T;H^1(\T^2))$, it is sufficient to observe that, since
$m$ is smooth, the consistency error of the approximation of the fractional derivative is of order $\Dt^{2-\alpha}$ (see \cite[Equation (3.3)]{lx}). Hence, by \eqref{eq:conv_es5},
it follows $\lim_{|\s|\to 0} \Dt\sum_{n=1}^N\|[D_hE_s^{n}]\|_2=0$ and therefore the convergence of the gradient of the error.\\ 
\indent In the final step of the proof,  we prove the convergence of $u_\s$ to $u$. By the convergence
of $m_\s$ to $m$ and assumption  (F4), we have that $\tdU$ is a solution to a discrete Hamilton-Jacobi-Bellman
equation with $M$ in place of $\tdM$, i.e.
\begin{equation}\label{eq:conv_HJ_pert}
\left\{
\begin{array}{ll}
\bar D_{\Delta t}^{\alpha} \tdU_{i,j}^{n} - (\Delta_h \tdU^{n})_{i,j} + g(x_{i,j},[D_h\tdU^{n}]_{i,j}) = f_h[M^{n+1}]_{i,j}+a^n_{i,j},\\[6pt] 
\tdU^{N_T}_{i,j} = u_T(x_{i,j}),
\end{array}
\right.
\end{equation}
where the perturbation $a^n$ still satisfies \eqref{eq:conv_consistency}. Arguing as in Prop. \ref{prop:lip_U} and with the same notation, we have
$U^n= \Psi\left(\sum_{k=n+1}^{N} \bc_{n}^k U^k\right)$, $\tdU^n= \Psi\left(\sum_{k=n+1}^{N} \bc_{n}^k \tdU^k+a^n\right)$. Hence
\begin{align*}
&\|\tdU^n-U^n\|_\infty= \left\|\Psi\left(\sum_{k=n+1}^{N} \bc_{n}^k \tdU^k+\rho_\alpha a^n\right)
-\Psi\left(\sum_{k=n+1}^{N} \bc_{n}^k  U^k\right)\right\|_\infty\\
&\le \sum_{k=n+1}^{N}\bc_{n}^k \left\| \tdU^k - U^k\right\|_\infty+\ra\|a^n\|_\infty.
\end{align*} 
The previous inequality is equivalent to
\[
\bar D_{\Delta t}^{\alpha}(\|\tdU^n-U^n\|_\infty)\le \|a^n\|_\infty.
\]
We conclude, by the backward version of discrete Gronwall  inequality \eqref{eq:discrete_Gronwall}, the
uniform convergence of $u_\s$ to $u$. The convergence of the gradients follows by \eqref{eq:conv_seminorm}.
\end{proof}

\begin{remark}
In this paper we have focused on the numerical approximation of the classical solution of the PDE system \eqref{eq:frac_mfg} and we have assumed the existence of a unique classical solution. We refer to \cite{q} for the existence and uniqueness of a weak solution to the system, where the solution of the fractional Fokker-Planck equation is defined in a time-fractional Bochner space and the equation is considered in the sense of distributions.  It would be very interesting to define some notion of renormalized solution to fractional PDEs and consider weak solution to the system, as well as approximation schemes. In the standard case,  convergence of a finite differences scheme to the weak solution of  a Mean Field Games system was shown in \cite{ap}.
\end{remark}

\section{Numerical simulations}\label{sec:numerics}
In this section, we illustrate some numerical simulations in one-dimensional case. In all  examples, we consider the domain $\T = [0,1]$ and the final time $T=2$. We test
\begin{equation}\label{eq:test}
\begin{cases}
\partial_{[t,T)}^\alpha u -\sigma \Delta u + \frac{u_x^2}{2} = F(x,t,m) \\[4pt]
\partial_{(0,t]}^\alpha m - \sigma \Delta m - (mu_x)_x = 0 \\[4pt]
\end{cases}
\end{equation}
with terminal-initial data $u(x,T)=0$ and $m(x,0)=\nu(x)/\int_\T \nu dx$
where $\nu(x) = e^{-(x-0.5)^2/(0.1)^2}$. Here, the running cost   function has a form
\[
F(x,t,m) = 5\left(x-\frac{1-\sin (2 \pi t)}{2}\right)^2 + \lambda m
\]
where $\lambda$ is a parameter.

\textbf{Test 1.}
For $(x,t)\in\T\times [0,2]$, we  take $\sigma = 0$ and $\lambda = 0$, i.e.  there is no diffusion and penalization term in \eqref{eq:test}. Even if this case is not covered by the results of the paper,  we consider it to highlight the effects of the fractional derivative. For the running cost in the MFG system, the agents try to concentrate at the point $(1-\sin (2 \pi t))/2$. We apply \eqref{eq:scheme} via full discretization of \eqref{eq:test}. We take $N_h=50$ and $N=2000$ so that $h=\frac{1}{50}$ and $\Delta t = \frac{T}{2000}$. Since in this case the Hamilton-Jacobi-Bellman equation in the system \eqref{eq:test} does not depend on $m$, the system is decoupled. In the numerical computation, we start from the terminal data $U_i^{N_T} = u_T(x_i)$ and go backward to iteratively calculate $U^n$ on the whole grid for all $0 \le n < N_T$. Having the numerical solution for the Hamilton-Jacobi-Bellman equation, we now can compute $M^n$ on the grid for all $0<n \le N$ starting from the initial time. In Figure 1, we provide density evolution for three different values of $\alpha$. From the level sets in Figure 2, we can see that the fractional time derivative has a kind of diffusion effect. Also, from Figure 2, we notice that when $\alpha$ gets smaller, the shape of the sinusoidal curve becomes more and more asymmetric and smoother. This means that for the smaller values of $\alpha$, in the mean-field game, the agents try to spread away from the sinusoidal curve.

\begin{center}
\begin{tabular}{ccc}
	\includegraphics[width=50mm]{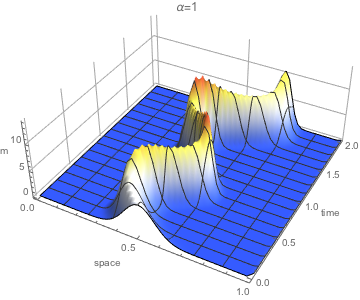}&		\includegraphics[width=50mm]{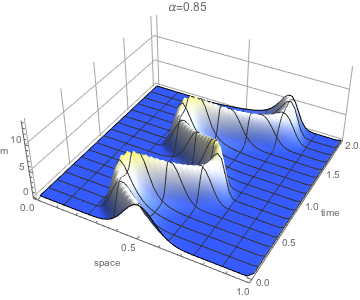}&
	\includegraphics[width=50mm]{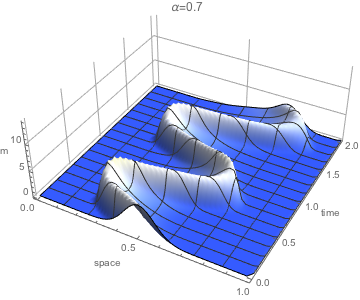}\\
	(A)&(B)&(C)	
\end{tabular}\\
	\captionof{figure}{Mass evolution (A) for $\alpha=1$ (B) for $\alpha=0.85$ (C) for $\alpha=0.7$.}
\end{center}

\begin{center}
	\begin{tabular}{ccc}
	\includegraphics[width=50mm]{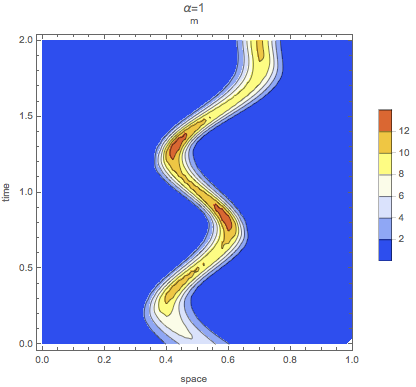}	&
	\includegraphics[width=50mm]{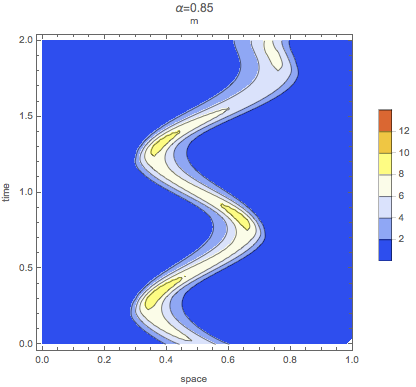}&
	\includegraphics[width=50mm]{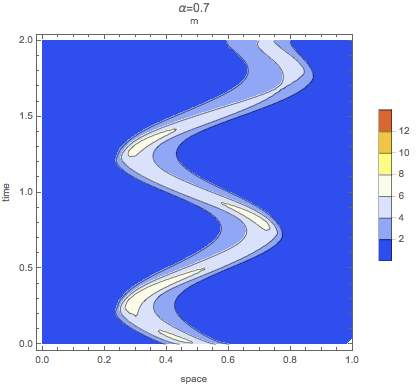}\\
	(A)&(B)&(C)	
\end{tabular}\\
	\captionof{figure}{Level sets of the mass evolution (A) for $\alpha=1$ (B) for $\alpha=0.85$ (C) for $\alpha=0.7$.}
\end{center}

\textbf{Test 2.} Now, we consider \eqref{eq:test} with $\sigma = 0.1$ and $\lambda=0$ to see how the diffusion term affects the density evolution. For this case, we provide only level sets of the density in Figure 3. During the whole time interval, a diffusive effect is observed. Moreover, its effect is much stronger than the fractional time derivative effect which is displayed in Figure 2. For example, we can compare Figure 2 (B), where $\alpha=0.85, \sigma =0$ and Figure 3 (A), where $\alpha=1, \sigma =0.1$.

     \begin{center}
	\begin{tabular}{ccc}
	\includegraphics[width=50mm]{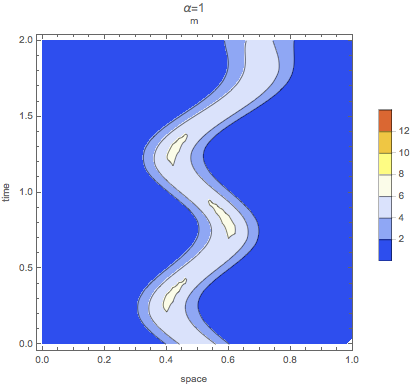}	&
	\includegraphics[width=50mm]{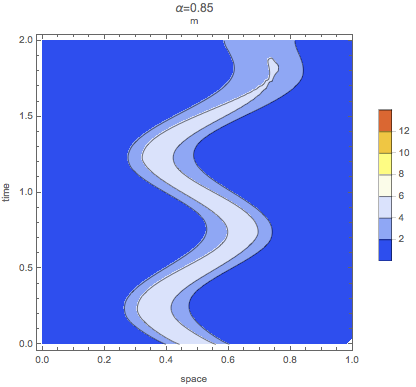}&
	\includegraphics[width=50mm]{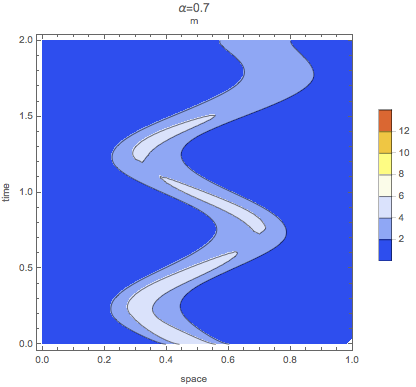}\\
	(A)&(B)&(C)	
	\end{tabular}\\
\captionof{figure}{Level sets of the mass evolution (A) for $\alpha=1$ (B) for $\alpha=0.85$ (C) for $\alpha=0.7$.}
	\end{center}

\textbf{Test 3.} We test the case where $\sigma=0$ and $\lambda=1$. Since the coupling cost is increasing in $m$, it penalizes concentration of the agents. Indeed, we see from Figure 4  that concentration of agents is not as high as in Test 1.

\begin{center}
	\begin{tabular}{ccc}
	\includegraphics[width=50mm]{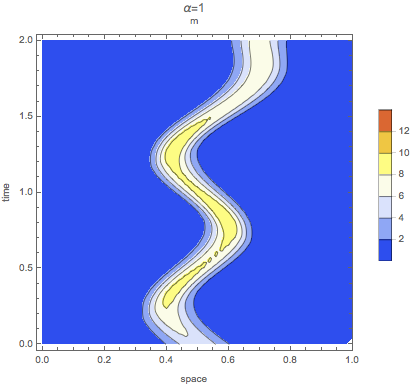}	&
	\includegraphics[width=50mm]{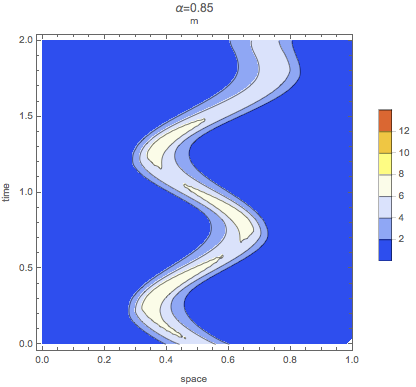}&
	\includegraphics[width=50mm]{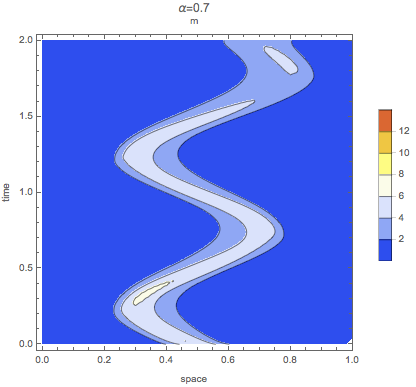}
	\\
	(A)&(B)&(C)	
\end{tabular}\\
	\captionof{figure}{Level sets of the mass evolution (A) for $\alpha=1$ (B) for $\alpha=0.85$ (C) for $\alpha=0.7$.}
\end{center}

\section{Discussion}
\indent In this paper, we considered the numerical approximation to a time-fractional forward backward PDE system. By using a classical scheme from Mean Field Games theory and L1 scheme for fractional derivatives the duality structure of the PDE system is preserved on the discrete level. The duality with regard to the fractional derivatives is further clarified in the following discrete fractional integration by parts formula in the appendix. In future work we may consider the error estimates of approximation scheme to the viscosity solution of the fractional Hamilton-Jacobi-Bellman equation as in \cite{glm}. Another interesting direction will be to consider solving the fractional forward backward system by machine learning methods. Machine learning methods for solving mean field games have been considered by Lauriere and Carmona in \cite{ml}. In \cite{Pang}, Pang, Li and Karniadakis used machine learning methods to solve forward and inverse problems of time-space fractional PDEs. A combination of techniques might produce fruitful results for studying fractional PDE systems. 

\appendix
\section{Appendix}
\begin{proof}[Proof of Lemma \ref{le:by_parts_n_n_plus_1}]
In this proof, we use (ii) in Lemma \ref{lem:c_properties}.
We have
\[
\begin{aligned}
\ra & \sum_{n=0}^{N-1} \left( \bar D_{\Delta t}^{\alpha} U^{n}, M^{n+1} \right)_2 + \sum_{n=0}^{N-1} \bar c_n^N \left(U^N,M^{n+1}\right)_2  \\
& = \sum_{n=0}^{N-1} \left(U^n - \sum_{k=n+1}^{N} \bar c_n^k U^k,M^{n+1}\right)_2 + \sum_{n=0}^{N-1} \bar c_n^N \left(U^N,M^{n+1}\right)_2 \\
& = \sum_{n=0}^{N-1} (U^n,M^{n+1})_2 - \sum_{n=0}^{N-2} \sum_{k=n+1}^{N-1} \left( \bar c_n^k U^k,M^{n+1}\right)_2 \quad \text{(change the order of sums)}\\
& =  \sum_{n=0}^{N-1} (U^n,M^{n+1})_2 - \sum_{k=1}^{N-1} \sum_{n=0}^{k-1} \left( \bar c_n^k U^k,M^{n+1}\right)_2 \quad\quad \text{(interchange $m$ and $n$)}\\
& = \sum_{n=0}^{N-1} (U^n,M^{n+1})_2 - \sum_{n=1}^{N-1} \sum_{k=0}^{n-1} \left( \bar c_k^n U^n,M^{k+1}\right)_2\\
& = (U^0,M^1)_2 + \sum_{n=1}^{N-1} (U^n, M^{n+1} - \sum_{k=0}^{n-1} \bar c_{k}^{n} M^{k+1})_2 \\
& = (U^0,M^1)_2 + \sum_{n=1}^{N-1} (U^n, M^{n+1} - \sum_{k=0}^{n-1} c_{k+1}^{n+1} M^{k+1})_2 \quad\quad  \text{(since $\bar c_{k}^{n} = c_{k+1}^{n+1}$)}\\
& = (U^0,M^1)_2 + \sum_{n=1}^{N-1} (U^n, M^{n+1} - \sum_{k=1}^{n} c_{k}^{n+1} M^{k})_2 \\
& = (U^0,M^1 - c_0^1 M^0)_2 + \sum_{n=1}^{N-1} (U^n, M^{n+1} - \sum_{k=0}^{n} c_{k}^{n+1} M^{k})+\sum_{n=0}^{N-1} c_0^{n+1}(U^n,M^0)_2 \\
& = \sum_{n=0}^{N-1} (U^n, M^{n+1} - \sum_{k=0}^{n} c_{k}^{n+1} M^{k})_2 + \sum_{n=0}^{N-1} c_0^{n+1}(U^n,M^0)_2 \\
& = \ra \sum_{n=0}^{N-1} (D_{\Delta t}^{\alpha} M^{n+1},U^{n})_2 + \sum_{n=0}^{N-1} c_0^{n+1}(U^n,M^0)_2.
\end{aligned}
\]
Dividing by $\ra$, we obtain the result.
\end{proof}

\end{document}